\newtheorem{assumption}{Assumption}
\newtheorem{corollary}{Corollary}
\newtheorem{definition}{Definition}
\newtheorem{proposition}{Proposition}
\newtheorem{remark}{Remark}
\newtheorem{theorem}{Theorem}
\DeclareMathOperator{\diag}{diag}
\DeclareMathOperator{\col}{col}
\newcommand{\jw}{\textcolor{black}}
\newcommand{\jdw}{\textcolor{black}}
\begin{document}

\sptitle{Article Category}

\title{Hybrid Data-enabled Predictive Control: Incorporating model knowledge into the DeePC} 


\author{J. D. WATSON\affilmark{1} (Member, IEEE)}



\affil{Department of Engineering, University of Canterbury, New Zealand. } 

\corresp{CORRESPONDING AUTHOR: J. D. Watson (e-mail: \href{mailto:jeremy.watson@canterbury.ac.nz}{jeremy.watson@canterbury.ac.nz})}

\markboth{HYBRID DATA-ENABLED PREDICTIVE CONTROL: INCORPORATING MODEL KNOWLEDGE INTO THE DEEPC}{J. D. WATSON}

\begin{abstract}
Predictive control can either be data-based (e.g. data-enabled predictive control, or DeePC) or model-based (model predictive control). In this paper we aim to bridge the gap between the two by investigating the case where only a partial model is available, i.e. incorporating model knowledge into DeePC. In our formulation, the partial knowledge takes the form of known state and output equations that are a subset of the complete model equations. We formulate an approach to take advantage of partial model knowledge which we call hybrid data-enabled predictive control (HDeePC). \jdw{We} prove feasible set equivalence and equivalent closed-loop behavior in the noiseless, LTI case. As we show, this has potential advantages over a purely data-based approach in terms of computational expense and robustness to noise in some cases. Furthermore, this allows applications to certain linear time-varying and nonlinear systems. Finally, \jdw{a number of case studies, including the control of an energy storage system in a microgrid, a triple-mass system, and a larger power system}, illustrate the potential of HDeePC.
\end{abstract}

\begin{IEEEkeywords}
Predictive control, data-enabled predictive control, energy systems
\end{IEEEkeywords}

\maketitle

\section*{NOMENCLATURE}
\addcontentsline{toc}{section}{NOMENCLATURE}
\jdw{\begin{IEEEdescription}[\IEEEusemathlabelsep\IEEEsetlabelwidth{$V_1,V_2,V_3$}]
\item[$\alpha$] piecewise nonlinear function in Case study 1b. 
\item[$A, B, C, D$] state-space matrices for the plant.
\item[$A_u, A_f$] decomposition of the matrix $A$ for HDeePC \eqref{eq:ss}. These matrices are unknown. 
\item[$A_c, A_\kappa$] decomposition of the matrix $A$ for HDeePC \eqref{eq:ss}. The matrices $A_c, A_\kappa$ are known.
\item[$A_y, C_y$] transformation matrices for HDeePC \eqref{eq:transform}.
\item[$C_u, C_f$] decomposition of the matrix $C$ for HDeePC \eqref{eq:ss}. These matrices are unknown. 
\item[$C_c, C_\kappa$] decomposition of the matrix $C$ for HDeePC \eqref{eq:ss}. The matrices $C_c, C_\kappa$ are known. 
\item[$\eta$] efficiency of the BESS. 
\item[$g$] decision variable in (H)DeePC. This is a vector of weights which (if a feasible $g$ can be found) ensures predicted trajectories are consistent with past data.
\item[$G$] plant $G(A,B,C,D)$.
\item[$h$] linear function for condensed HDeePC formulation \eqref{eq:hdpc_condensed}.
\item[$k$] time-step within the predictive control optimization problem.
\item[$\textbf{l}(G)$] lag of the plant $G$. 
\item[$L$] depth (number of block rows) of the Hankel matrix, $L = T_{ini} + N$.
\item[$\lambda_g, \lambda_y$] regularization weights.
\item[$m$] number of inputs to the plant.
\item[$n$] order (number of states) of the plant $G$. 
\item[$n_u$] number of states of the unknown part of the plant.
\item[$n_\kappa$] number of states of the unknown part of the plant. $n_u+n_\kappa = n$.
\item[$N$] horizon for predictive control.
\item[$p$] number of outputs from the plant.
\item[$p_u$] number of outputs from the unknown part of the plant.
\item[$p_\kappa$] number of outputs from the unknown part of the plant. $p_u+p_\kappa = p$.
\item[$Q$] output cost matrix.
\item[$r$] reference trajectory for the plant's outputs to follow.
\item[$R$] input cost matrix.
\item[$t$] time.
\item[$T$] data length in (H)DeePC.
\item[$T_{ini}$] number of past observations used in (H)DeePC.
\item[$\tau_q$] time constant of the BESS.
\item[$u$] input vector to the plant.
\item[$u^d$] collected input data.
\item[$u_{ini}$] past input trajectory.
\item[$u_i$] $i$th input channel.
\item[$U_P, U_F$] past and future inputs in DeePC (similar to $Y_P, Y_F$ for outputs).
\item[$x$] state vector of the plant.
\item[$x^*$] optimized variable $x$ (similarly for other variables).
\item[$\hat x$] measured (or estimated) state vector.
\item[$x_\kappa$] known state vector.
\item[$y$] output vector from the plant.
\item[$y^d$] collected output data.
\item[$y_{ini}$] past input trajectory (for DeePC).
\item[$y_{u,ini}$] past input trajectory (for HDeePC).
\item[$y_\kappa$] known output vector.
\item[$y_u$] unknown output vector.
\item[$Y_P, Y_F$] past and future outputs in DeePC (similar to $U_P, U_F$ for inputs).
\item[$Y_{U,P}, Y_{U,F}$] past and future outputs in HDeePC (replaces $Y_P, Y_F$).
\end{IEEEdescription}}

\section{INTRODUCTION}

Predictive control is one of the most important control techniques both in industry and academia. By solving an optimization problem to determine the control input to the plant, the cost can be minimized and constraints directly incorporated into the formulation. Traditionally, this has been based on the knowledge of the plant's state-space model, known as model predictive control (MPC). However, obtaining a model can be time-consuming and/or expensive \cite{hjalmarsson2005}. \jdw{System identification approaches (followed by MPC) are widely used and often effective; nevertheless there are cases where a data-driven form of predictive control \jdw{is} advantageous \cite{dorfler2023}.}

Data-enabled predictive control (DeePC) was proposed in 2019 by \cite{coulson2019, coulson2022}, using previously measured input/output trajectories to represent the plant, based on the Fundamental Lemma in behavioral systems theory \cite{markowsky2006, depersis2020}. This has \jdw{advantages} compared to MPC, as knowledge of the state-space model is no longer required. DeePC has been successfully applied to many real-world problems including power converters and power systems \cite{huang2019, huang2022}, traffic congestion control \cite{rimoldi2024}, quadcopters \cite{elokda2021}, and motor drives \cite{carlet2020}. \jdw{Although MPC is much more theoretically established, recent work has also demonstrated various stability and robustness guarantees for data-driven forms of predictive control which are attractive (e.g. \cite{berberich2021, berberich2025}).} 

However, there are weaknesses with DeePC, including: noise susceptibility, \jdw{difficulties applying DeePC to nonlinear systems (Willem's Fundamental Lemma applies to LTI systems only), computational cost \cite{li2025review, zhang2023, vahidimoghaddam2024onlinereducedorderdataenabledpredictive, zhou2025}, and sampling requirements. The weaknesses have been a focus of considerable recent work, e.g. \cite{berberich2021, huang2023} consider robustness to noise, \cite{zhang2023,vahidimoghaddam2024onlinereducedorderdataenabledpredictive} look at ways to improve computational performance, while others (e.g. \cite{lian2021,berberich2022,lazar2024,berberich2024,huang2024,naf2025}) have considered how data-driven control can be applied to nonlinear systems via use of nonlinear kernels, reformulations, or intelligent selection of the data used as predictor. This paper presents another technique to address some of these weaknesses, which is} via incorporating a known part of the model into data-driven predictive control.  While the theoretical results in this paper are relatively straightforward, the contributions are meaningful for various practical applications, including energy storage / power systems and certain classes of nonlinear or time-varying systems.

\jdw{Incorporating partial model knowledge into DeePC can be beneficial in several ways. First, it enables the exact representation of known dynamics, allowing the formulation to ``approach" MPC as more of the dynamics are known. This can improve performance and reduce computational expense, as is consistent with previous work \cite{morato2024} and is illustrated in our case studies. Furthermore, under certain conditions, nonlinear and time-varying dynamics can be incorporated, thereby avoiding the poor system representations that may arise when applying purely data-driven methods to such systems \cite{baros2022}. Additionally, model knowledge can help address practical sampling issues: when a system has both fast and slow dynamics, the sampling rate must be chosen to capture the fast dynamics, but this may render the slow dynamics difficult to observe in noisy, finite-precision measurements. Incorporating a model of either the slow or fast dynamics often alleviates this problem.}

\jdw{In this paper, we show that incorporating partial model knowledge is possible by posing both data-based and model-based representations as constraints to the predictive control optimization problem. We call the resulting algorithm hybrid data-enabled predictive control (HDeePC) and investigate the conditions under which this is possible. While incorporating partial model information into data-driven control has been successfully applied \cite{berberich2020, djeumou2022}, to the author's knowledge, the combination of model information with data-enabled predictive control in this fashion has only recently been explored in a few preprints; in this work \cite{watson2025hybriddataenabledpredictivecontrol} and later \cite{zieglmeier2025semidatadrivenmodelpredictivecontrol} and \cite{li2025mdrdeepcmodelinspireddistributionallyrobust}. \cite{zieglmeier2025semidatadrivenmodelpredictivecontrol} employs a different formulation based on an underlying parametric model to enhance robustness, while  MDR-DeePC \cite{li2025mdrdeepcmodelinspireddistributionallyrobust} focuses on empirical robustness via distributionally robust optimization. In contrast, this paper establishes the theoretical framework for incorporating partial model knowledge.} The paper's contributions may be summarized as follows: 
\begin{enumerate}
    \item We investigate the incorporation of partial model knowledge into DeePC, and propose HDeePC to achieve this. Our formulation generalizes predictive control between the two cases of DeePC (data-based representation) and MPC (discrete-time state-space equation representation). 
    \item We derive conditions under which we are able to prove feasible set equivalence to MPC/DeePC and equivalent closed-loop behavior for the proposed HDeePC.
    \item \jdw{We show that a restricted class of nonlinear systems can be incorporated into the HDeePC framework. }
\end{enumerate}

\jdw{We} validate our results and illustrate the \jw{practical advantages} of HDeePC with examples \jdw{(including two nonlinear cases) in Section \ref{cs}, for which code has been made publicly available. \jdw{We also conduct an empirical investigation into the number of known states and model mismatch issues.}}

The paper is organized as follows: in section \ref{problem} the problem is formulated along with a brief overview of both MPC and DeePC. We then present hybrid data-enabled predictive control in section \ref{hdpc}, showing in that in the noiseless LTI case, HDeePC results in feasible set equivalence and equivalent closed-loop behavior to either MPC or DeePC (which were shown to be equivalent to each other in \cite{coulson2019}). We then present \jdw{three case studies (a battery energy storage system (BESS), a triple mass system \cite{fiedler2021}, and an AC/DC power system) in Section \ref{cs}, with further discussion in Section \ref{discuss} and} concluding remarks provided in Section \ref{conc}.

\section{PROBLEM FORMULATION AND OVERVIEW}\label{problem}

\subsection{PROBLEM STATEMENT AND NOTATION}

We investigate the control of a linear discrete-time system $\jw{G}(A,B,C,D)$ of the form:
\begin{subequations}\label{eq:ss0}
    \begin{align}
        x(t+1) &= Ax(t) + Bu(t)\\
        y(t) &= Cx(t) + Du(t)
    \end{align}
\end{subequations}
with system matrices $A \in \mathbb{R}^{n\times n}$, $B \in \mathbb{R}^{n\times m}$, $C \in \mathbb{R}^{p\times n}$, and $D \in \mathbb{R}^{p\times m}$, and input $u(t) \in \mathbb{R}^m$, state $x(t) \in \mathbb{R}^n$ and output $x(t) \in \mathbb{R}^p$ at time $t \in \mathbb{Z}_{\geq 0}$. We denote the order of \eqref{eq:ss0} (assuming a minimal realization) by $\mathbf{n}(\jw{G}) \in \mathbb{Z}_{> 0}$ and its lag\footnote{See \cite[Section IV.B]{coulson2019} for a full definition of these concepts.} by $\mathbf{l}(\jw{G}) \in \mathbb{Z}_{> 0}$. As will be discussed later, we investigate the case where the matrices $A,B,C,D$ are \emph{partially known}. We will therefore consider systems with an unknown part and a known part. \jdw{We use $u$ and $\kappa$ subscripts to denote variables (and matrices, where possible) corresponding to the unknown and known part, respectively. To avoid confusion we therefore denote the value of a variable (e.g. $x$) at time-step $k$ by $x(k)$ rather than $x_k$ which may be confused for the known part of the states.}

Our objective is to design an appropriate input trajectory \jdw{$u(t)$} to \eqref{eq:ss0} in order to track some given reference trajectory \jdw{$r(t)$} while satisfying constraints on input \jdw{$u(t) \in \mathcal{U}$} and output \jdw{$y(t) \in \mathcal{Y}$} and minimizing an appropriate cost function. \jdw{We use $t$ for the time at which the predictive control problem is solved and $k$ for the time index within the horizon of the optimization problem.}


\subsection{BEHAVIORAL THEORY}

Since the fundamental definitions and concepts of behavioral systems theory and the fundamental lemma have appeared in many papers over the last few years, we only restate a few concepts we will use in the rest of the paper. An excellent overview of behavioral systems theory applied to predictive control may be found in \cite{coulson2019} which inspired the present work, and the following definitions and equations are largely restated from the same reference. 
\begin{definition}[Persistency of excitation] \cite[Definition 4.4]{coulson2019} \label{def:pe}
Let $L,T \in \mathbb{Z}_{>0}$ such that $T \geq L$. The signal $u = \textrm{col}(u(1), \dots,u(T)) \in \mathbb{R}^{Tm}$ is \textit{persistently exciting of order $L$} if the Hankel matrix
\begin{equation*}
    \mathcal{H}_L(u) :=\begin{pmatrix}u(1) & \dots & u({T-L+1})\\
    \vdots & \ddots &\vdots \\
    u(L) & \dots & u(T)\end{pmatrix}
\end{equation*}
is of full row rank. 
\end{definition}
\jdw{The term \textit{persistently exciting} refers to input data that is sufficiently rich and long (Definition \ref{def:pe}) such that when this input excites the system, an output sequence is yielded that fully represents the system. If so, the \textit{persistency of excitation} condition is satisfied. This allows a data-based representation of the system, as follows. Suppose} this condition is satisfied for some input sequence $u^d$ to system \eqref{eq:ss0} and input/output data $\col(u^d, y^d)$ is collected from applying $u^d$ to \eqref{eq:ss0} and recording the output $y^d$. \jdw{Furthermore,} $\col(u^d, y^d)$ is partitioned as follows \cite[(4)]{coulson2019} for some $N \in \mathbb{Z}_{> 0}$:
\begin{equation}\label{eq:uypf}
    \begin{pmatrix}
        U_P \\ U_F
    \end{pmatrix} :=\mathcal{H}_{T_{ini} + N}(u^d), 
    \begin{pmatrix}
        Y_P \\ Y_F
    \end{pmatrix} :=\mathcal{H}_{T_{ini} + N}(y^d), 
\end{equation}
\jw{where $U_P \in \mathbb{R}^{mT_{ini} \times L}$ is the first $T_{ini}$ block rows of $\mathcal{H}_L(u^d)$, $U_F \in \mathbb{R}^{mN \times L}$ is the last $N$ block rows of $\mathcal{H}_L(u^d)$, $Y_P \in \mathbb{R}^{pT_{ini} \times L}$ and $Y_F \in \mathbb{R}^{pN \times L}$ likewise of $\mathcal{H}_L(y^d)$.} \jdw{Then,} any sequence $\col(u_{ini},u,y_{ini},y)$ is a trajectory of \eqref{eq:ss0} if and only if there exists a $g \in \mathbb{R}^{T-L+1}$ such that \cite[(5)]{coulson2019}
\begin{equation}\label{eq:hdpcdata0}
         \begin{bmatrix} U_P  \\ Y_P \\ U_F \\ Y_F\end{bmatrix}g = \begin{bmatrix} u_{ini} \\ y_{ini} \\ u \\ y\end{bmatrix}.\\
\end{equation}
\jdw{where the vectors $u_{ini}$, $y_{ini}$ respectively denote the $T_{ini}$ most recent input and output measurements.}

\jw{As} (verbatim) in \cite{coulson2019}, we denote the lower triangular Toeplitz matrix consisting of $A, B, C, D$ as:
\begin{equation*}
    \mathscr{T}_N\jw{(A,B,C,D)} :=\begin{pmatrix} D & 0 &\dots & 0\\
    CB & D \dots & 0 \\
    \vdots & \ddots \ddots & \vdots \\
    CA^{N-2}B & \dots & CB & D\end{pmatrix}
\end{equation*}
and the observability matrix as:
\begin{equation*}
    \mathscr{O}_N(A,C) := \col(C,CA,\dots,CA^{N-1}).
\end{equation*}

\subsection{MODEL PREDICTIVE CONTROL}

If the system is known, model predictive control can be used. We (mostly) follow the notation in \cite[equation (2)]{coulson2019} in stating the optimization problem:
\begin{equation}\label{eq:mpc}
\begin{aligned}
\min_{u,x,y} \quad & \sum_{k=0}^{N-1}\Vert y(k)-r(k)\Vert^2_Q + \Vert u(k)\Vert^2_R\\
  &x(0) = \hat x(t) \\
  &x(k+1) = Ax(k) + Bu(k), \forall k \in \{0, \dots, N-1\}    \\
  &y(k) = Cx(k) + Du(k), \forall k \in \{0, \dots, N-1\}  \\
  &u(k) \in \mathcal{U{}}, \forall k \in \{0, \dots, N-1\}    \\
  &y(k) \in \mathcal{Y{}}, \forall k \in \{0, \dots, N-1\}    \\
\end{aligned}
\end{equation}

where $N \in {\mathbb{Z}_{> 0}}$ is the time horizon, $u = (u(0), u(1), \dots,$
$u(N-1))$, $x = (x(0), x(1), \dots,x(N-1))$, and $y = (y(0), y(1), \dots,y$$(N-1))$ are the decision variables, $r =  (r(t), r(t+1), \dots,r(t+N-1))$ is the desired reference trajectory throughout the control horizon, $\mathcal{U} \subseteq \mathbb{R}^m$ and $\mathcal{Y} \subseteq \mathbb{R}^p$ are input and output constraint sets respectively, and $Q \in \mathbb{R}^{p\times p}_{\geq 0}$ and $R \in \mathbb{R}^{m\times m}_{> 0}$ are the output and state cost matrices respectively. $\hat x(t)$ is the measured \jdw{(or estimated)} state vector at time $t$, the time at which the optimization problem is solved. As is clear from \eqref{eq:mpc}, the state matrices $(A, B, C, D)$ must be known.

\subsection{DATA-ENABLED PREDICTIVE CONTROL}

Replacing the model equations in \eqref{eq:mpc} with linear constraints based on the collected data gives the data-enabled predictive control problem \cite[equation (6)]{coulson2019}:
\begin{equation}\label{eq:deepc}
\begin{aligned}
\min_{g,u,y} \quad & \sum_{k=0}^{N-1} \Vert y(k) - r(t+k) \Vert_Q^2 + \Vert u(k)\Vert_R^2\\
\textrm{s.t.} \quad & \begin{bmatrix} U_P \\ Y_P \\ U_F \\ Y_F\end{bmatrix}g = \begin{bmatrix} u_{ini} \\ y_{ini} \\ u \\ y\end{bmatrix}\\
  &u(k) \in \mathcal{U}, \forall k \in \{0, \dots, N-1\}    \\
  &y(k) \in \mathcal{Y}, \forall k \in \{0, \dots, N-1\}    
\end{aligned}
\end{equation}
where $g \in \mathbb{R}^{T - T_{ini} - N + 1}$, $U_P, U_F, Y_P, Y_F$ are past/future input/output data as described in \eqref{eq:hdpcdata0}, and other notation is identical to \eqref{eq:mpc}.

\section{HYBRID DATA-ENABLED PREDICTIVE CONTROL}\label{hdpc}

\subsection{FORMULATION}
Let us start by considering a system \jdw{comprised of an unknown and known subsystem as follows}: 
\begin{subequations}\label{eq:ss}
    \begin{align}
        x(t+1) &= \begin{bmatrix}
            x_u(t+1) \\
            x_\kappa(t+1)
        \end{bmatrix} =  \begin{bmatrix}
            A_u & A_f \\
            A_c & A_\kappa
        \end{bmatrix}x(t) + \begin{bmatrix}
            B_u \\
            B_\kappa
        \end{bmatrix}u(t) \\
        y(t)  &= \begin{bmatrix}
            y_u(t) \\
            y_\kappa(t)
        \end{bmatrix} =  \begin{bmatrix}
            C_u & C_f \\
            C_c & C_\kappa
        \end{bmatrix}x(t) +  \begin{bmatrix}
            D_u \\
            D_\kappa
        \end{bmatrix}u(t) 
    \end{align}
\end{subequations}

where $A_u \in \mathbb{R}^{n_u \times n_u}$ etc., and \jdw{$n_\kappa$ and $n_u$ ($p_\kappa$ and $p_u$)} are the number of known and unknown states (outputs) respectively. \jdw{We assume that the system matrices 
$A_c, A_\kappa, B_\kappa, C_c, C_\kappa, D_\kappa$ 
are known a priori, and that the vectors $y_u$ and $y_\kappa$ 
correspond to disjoint subsets of the system outputs.\footnote{\jdw{This assumption avoids infeasibility arising from inconsistencies 
between model-based and data-driven predictions of the same output trajectory.}}} We start by applying a data-based representation for the unknown dynamics:
\begin{equation}\label{eq:hdpcdata}
         \begin{bmatrix} U_P  \\ Y_{U,P} \\ U_F \\ Y_{U,F}\end{bmatrix}g = \begin{bmatrix} u_{ini} \\ y_{u,ini} \\ u \\ y_u\end{bmatrix}\\
\end{equation}
where $Y_{U,P}$ and $Y_{U,F}$ are the partitioned Hankel matrix (analogously to \eqref{eq:uypf}) of the output data $y^d_u$ collected from the system \eqref{eq:ss0}; in this case only the outputs in $y_u$ are included. We also include a state-space system for the known dynamics:
\begin{equation}\label{eq:hdpcss_pre}
\resizebox{1\linewidth}{!}{%
$\displaystyle
\begin{aligned}
     x_\kappa(0) &= \hat x_\kappa(t) \\
     x_\kappa(k+1) &= A_c x_u(k) + A_\kappa x_\kappa(k) + B_\kappa u(k), \\&\quad\quad\quad\quad\quad\quad\quad\quad\quad\quad\quad \forall k \in \{0,\dotsc,N-1\} \\
     y_\kappa(k) &= C_c x_u(k) + C_\kappa x_\kappa(k) + D_\kappa u(k), \\&\quad\quad\quad\quad\quad\quad\quad\quad\quad\quad\quad \forall k \in \{0,\dotsc,N-1\}
\end{aligned}
$%
}\end{equation}

An obvious issue is that since there is no explicit representation in \eqref{eq:hdpcdata} for $x_u$, $x_\kappa$ and $y_\kappa$ cannot easily be a function of $x_u$ \jw{(i.e. if $A_c, C_c$ are not zero matrices, \eqref{eq:hdpcss_pre} requires knowledge ($x_u$) which is not available). In the case of coupled states/outputs with non-zero $A_c$ and/or $C_c$, our approach is to transform the system to use the information in $y_u$ instead (if possible - see Remark \ref{rmk:transform}), via the following equations:}
\jw{\begin{subequations}\label{eq:transform}
    \begin{align}
        A_c &= A_yC_u, \quad A_yC_f = 0_{n_\kappa\times n_\kappa}, \quad A_yD_u = 0_{n_\kappa\times m}\label{eq:transforma}\\
        C_c &= C_yC_u, \quad C_yC_f = 0_{p_\kappa\times n_\kappa}, \quad C_yD_u = 0_{p_\kappa\times m}.\label{eq:transformc}
    \end{align}
\end{subequations}}
We are now ready to state the assumption that is necessary for HDeePC to be applied to \eqref{eq:ss}.
\begin{assumption}\label{as:transform}
    \jw{(A) Given the system \eqref{eq:ss}, a matrix $A_y$ that satisfies \eqref{eq:transforma} exists and is known. (B) Likewise, a matrix $C_y$ that satisfies \eqref{eq:transformc} exists and is known.}
\end{assumption}

\jdw{\normalsize As above, we assume $A_y$ and $C_y$ are known a priori, but if not, equations \eqref{eq:transforma} and \eqref{eq:transformc} may be solved as feasibility problems with $A_y$ and $C_y$ as decision variables. However, if \eqref{eq:transform} has to be explicitly solved, knowledge of $C_u, C_f$ and $D_u$ (which are otherwise assumed unknown) is additionally required.}

If Assumption \ref{as:transform} holds, we can substitute \eqref{eq:transform} into \eqref{eq:hdpcss_pre}:
\begin{equation}\label{eq:hdpcss}
\resizebox{1\linewidth}{!}{%
$\displaystyle
\begin{aligned}
     x_\kappa(0) &= \hat x_\kappa(t) \\
     x_\kappa(k+1) &= A_y y_u(k) + A_\kappa x_\kappa(k) + B_\kappa u(k), \\&\quad\quad\quad\quad\quad\quad\quad\quad\quad\quad\quad \forall k \in \{0, \dotsc, N-1\} \\
     y_\kappa(k) &= C_y y_u(k) + C_\kappa x_\kappa(k) + D_\kappa u(k),
     \\&\quad\quad\quad\quad\quad\quad\quad\quad\quad\quad\quad \forall k \in \{0, \dotsc, N-1\}.
\end{aligned}
$}
\end{equation}

\begin{remark}\label{rmk:transform}
\jdw{\textit{Restrictiveness of Assumption \ref{as:transform}:} The representation of $A_cx_u$ and $C_cx_u$ in terms of $A_yy_u$ and $C_yy_u$, respectively, is possible only if \eqref{eq:transform} holds. This is without loss of generality only when $C_u$ has full column rank and $C_f, D_u$ are zero. If Assumption \ref{as:transform}(B) is violated, the corresponding outputs can instead be included in the data-driven part \eqref{eq:hdpc}, disregarding prior knowledge of their equations. Although Assumption \ref{as:transform} imposes a strong structural condition and may be restrictive for some systems, it is satisfied by a range of practically relevant models, including the energy storage systems used in our first case study.}
\end{remark}

\begin{figure}[!ht]
    \centering
    \includegraphics[width=0.5\textwidth]{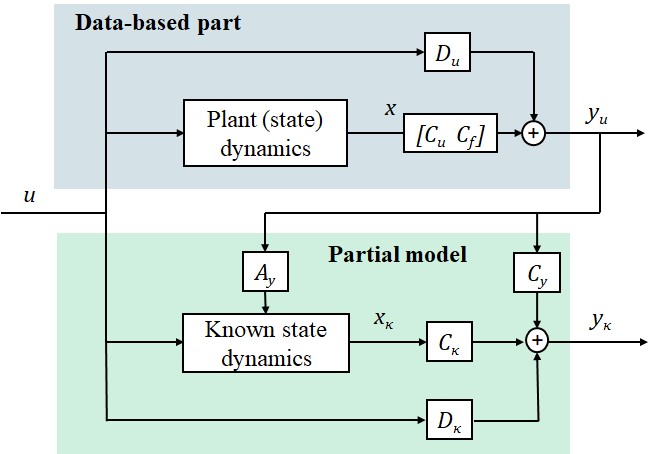}
    \caption{Illustration of formulation used in HDeePC.}
    \label{fig:hdpcsys}
\end{figure}

We illustrate the proposed formulation in Fig. \ref{fig:hdpcsys}. Given a time horizon $N > 0$, a reference trajectory $r = (r(t), r(t+1), \dots, r(t+N-1))$, and measurements $\hat x_\kappa(0)$ we formulate the following optimization problem for HDeePC:
\begin{equation}\label{eq:hdpc}
\begin{aligned}
\min_{g,u,x_\kappa,y_u, y_\kappa} \quad & \sum_{k=0}^{N-1} \Vert y(k) - r(t+k) \Vert_Q^2 + \Vert u(k)\Vert_R^2\\
\textrm{s.t.} \quad & \begin{bmatrix} U_P \\ Y_{U,P} \\ U_F \\ Y_{U,F}\end{bmatrix}g = \begin{bmatrix} u_{ini} \\ y_{u,ini} \\ u \\ y_u\end{bmatrix}\\
  &x_\kappa(0) = \hat x_\kappa(t) \\
  &x_\kappa(k+1) = A_yy_u(k) + A_\kappa x_\kappa(k) + B_\kappa u(k), \\
  & \quad \quad \quad \quad \forall k \in \{0, \dots, N-1\}    \\
  &y_\kappa(k) = C_yy_u(k) + C_\kappa x_\kappa(k) + D_\kappa u(k), \\
  & \quad \quad \quad \quad \forall k \in \{0, \dots, N-1\}  \\
  &u(k) \in \mathcal{U{}}, \forall k \in \{0, \dots, N-1\}    \\
  &y(k) \in \mathcal{Y{}}, \forall k \in \{0, \dots, N-1\}    \\
\end{aligned}
\end{equation}
where $y(k) = [y_u(k) \quad y_\kappa(k)]^T$.

If the systems were fully decoupled in their dynamics except that the same input is applied to both,  \eqref{eq:hdpc} finds an input that satisfies a separate DeePC and MPC problem with a blended cost function. However, the interesting part is when the known and unknown dynamics are coupled, in this case by matrices $A_f$ and $A_c$ as in \eqref{eq:ss} \jw{and Assumption \ref{as:transform} is required to cope with the coupling. Note that} we collect data from all inputs to the system for the data-based representation, but only the unknown outputs \eqref{eq:hdpcdata}. This ensures there are no inputs to the data-based part of the system which are not taken into account. The HDeePC problem \eqref{eq:hdpc} is then solved with a receding horizon (Algorithm \ref{alg:cap}), similarly to DeePC or MPC \cite{coulson2019,baros2022}.
\begin{algorithm}
\caption{HDeePC}\label{alg:cap}
\begin{algorithmic}
\Require Partial model ($A_c,A_\kappa,B_\kappa,C_c,C_\kappa,D_\kappa$), off-line data $\col(u^d, y^d_u)$ and most recent collected data $\col(u_{ini}, y_{u,ini})$, reference trajectory $r \in \mathbb{R}^{Np}$, input/output constraints $\mathcal{U},\mathcal{Y}$, cost function matrices $Q \geq 0, R > 0$.
\begin{enumerate}
    \item Solve \eqref{eq:hdpc} for $g^*$, $u^*$, $x_\kappa^*$, $y^*$.
    \item Apply input $(u^*(0),\dots,u^*(s))$ for some $s \leq N-1$ to the system. 
    \item Update $u_{ini}$ and $y_{ini}$ to the $T_{ini}$ more recent input/output measurements.
    \item Return to 1.
\end{enumerate}
\end{algorithmic}
\end{algorithm}

\begin{remark}\label{rmk:obs}
\jdw{\textit{Measurement requirements and observers:} In order for $C_u$ to be full column rank as mentioned in Remark~\ref{rmk:transform}, the unknown states must be directly measurable. Since the known states also appear in \eqref{eq:hdpc}, this condition typically amounts to requiring full state measurements, except for any states in $x_u$ that are completely decoupled from the known dynamics. An alternative to this stringent requirement is to employ a (partial) state observer\footnote{\jdw{E.g. $\hat x_\kappa(t+1) = A_\kappa\hat{x}_\kappa(t) + A_yy_u(t) + B_\kappa u(t) + L(y_\kappa(t) - (C_yy_u(t) + C_\kappa\hat{x}_\kappa(t) + D_\kappa u(t)))$ with appropriately designed observer gains $L$.}}. Estimated known states can then replace the true states in the model-based equalities in \eqref{eq:hdpc}, with the estimation error treated as an additive disturbance. Under standard detectability conditions this error is uniformly bounded, enabling the hybrid DeePC framework to be applied in practice with certainty-equivalent or robustified formulations. Obviously, unknown states cannot be estimated in this way, since an observer requires knowledge of their state equations; thus, this approach does not relax Assumption \ref{as:transform}.}
\end{remark}

\subsection{RELATION TO DEEPC AND MPC}

We now state two fairly obvious results for completeness in Propositions \ref{prop:deepc} and \ref{prop:mpc}. The propositions show that HDeePC is equivalent to either DeePC or MPC in the case that no model knowledge is available or full model knowledge is available, respectively. 

\begin{proposition} \label{prop:deepc}
Suppose there are no known states $x_\kappa$ and outputs $y_\kappa$ (for which the equation is known) and thus $x_\kappa$ and $y_\kappa$ are empty vectors. Then  \eqref{eq:hdpc} is equivalent to \eqref{eq:deepc}.
\end{proposition}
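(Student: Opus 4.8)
The plan is to verify that, under the stated hypothesis, the optimization problem \eqref{eq:hdpc} reduces term-by-term to \eqref{eq:deepc}, so that the two problems have identical decision variables, objective, and constraint sets. First I would observe that when $x_\kappa$ and $y_\kappa$ are empty vectors, the full output reduces to $y(k) = y_u(k)$, and correspondingly the collected output data $y^d$ consists entirely of $y^d_u$, so that $Y_{U,P} = Y_P$ and $Y_{U,F} = Y_F$ and $y_{u,ini} = y_{ini}$. Hence the data-driven equality constraint in \eqref{eq:hdpc} becomes literally the equality constraint in \eqref{eq:deepc}, and the dimension of $g$ matches.

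Next I would address the model-based part of \eqref{eq:hdpc}: the three equalities involving $x_\kappa(0)$, $x_\kappa(k+1)$, and $y_\kappa(k)$. Since $x_\kappa$ is an empty vector, $A_\kappa, B_\kappa, C_\kappa, D_\kappa, A_y, C_y$ and the measured $\hat x_\kappa(t)$ all have a zero dimension, so each of these constraints is vacuous (an equality between $0$-dimensional objects) and can be dropped without changing the feasible set. Likewise the decision variables $x_\kappa$ and $y_\kappa$ disappear from the problem. What remains in \eqref{eq:hdpc} are exactly the variables $g, u, y$ (writing $y$ for $y_u$), the data equality, the input constraint $u(k)\in\mathcal U$, and the output constraint $y(k)\in\mathcal Y$ — which is precisely \eqref{eq:deepc}.

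Finally I would check the objective: in \eqref{eq:hdpc} the stage cost is $\Vert y(k)-r(t+k)\Vert_Q^2 + \Vert u(k)\Vert_R^2$ with $y(k)=[y_u(k)\ \ y_\kappa(k)]^T$; with $y_\kappa$ empty this is $\Vert y_u(k)-r(t+k)\Vert_Q^2 + \Vert u(k)\Vert_R^2$, identical to the objective of \eqref{eq:deepc}. Having matched the decision variables, objective, and all constraints, the two problems coincide, so they are equivalent in the sense that every feasible point of one is a feasible point of the other with the same cost, and hence they share the same optimal value and optimizers. The proof is entirely bookkeeping; the only mild subtlety — and the step most worth stating carefully — is the convention that equalities and norms involving empty ($0$-dimensional) vectors are trivially satisfied, so that the model-based block of \eqref{eq:hdpc} genuinely vanishes rather than imposing a spurious constraint. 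I do not anticipate any real obstacle beyond making this degeneracy convention explicit.
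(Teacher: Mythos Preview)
Your proposal is correct and follows essentially the same approach as the paper: both argue that with $x_\kappa$ and $y_\kappa$ empty, the model-based constraints vanish and $y=y_u$, reducing \eqref{eq:hdpc} to \eqref{eq:deepc}. Your version is simply a more detailed, term-by-term verification of what the paper dispatches in one sentence.
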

\begin{proof}
Since $x_\kappa, y_\kappa$ are empty, $x = x_u$, $y = y_u$. The proposition follows trivially by removing all $x_\kappa$ and $y_\kappa$ terms from \eqref{eq:hdpc} and then substituting $x = x_u$, $y = y_u$.
\end{proof}

\begin{proposition} \label{prop:mpc}
Suppose there are no unknown states $x_u$ and outputs $y_u$, and thus $x_u$ is an empty vector and no data is collected (i.e. \eqref{eq:hdpcdata} is removed from the constraints of \eqref{eq:hdpc}). Then \eqref{eq:hdpc} is equivalent to \eqref{eq:mpc}.
\end{proposition}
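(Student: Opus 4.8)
The plan is to mirror the proof of Proposition~\ref{prop:deepc}, exploiting the fact that when $x_u$ and $y_u$ are empty, the data-based block \eqref{eq:hdpcdata} vanishes entirely and all coupling terms involving $y_u$ drop out of the model equations in \eqref{eq:hdpc}. First I would observe that if there are no unknown states or outputs, then $x = x_\kappa$ and $y = y_\kappa$, the matrices $A_y$ and $C_y$ are absent (or can be taken as empty), and $A_\kappa = A$, $B_\kappa = B$, $C_\kappa = C$, $D_\kappa = D$ by the block decomposition in \eqref{eq:ss}. Under the hypothesis, constraint \eqref{eq:hdpcdata} is removed, so the decision variable $g$ and the variables $y_u$ disappear from \eqref{eq:hdpc}.

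Next I would substitute these identities directly into the HDeePC optimization problem \eqref{eq:hdpc}. The initial condition becomes $x(0) = \hat x(t)$; the state recursion becomes $x(k+1) = A x(k) + B u(k)$ for all $k \in \{0,\dots,N-1\}$; the output equation becomes $y(k) = C x(k) + D u(k)$ for all $k \in \{0,\dots,N-1\}$; and the input/output constraints $u(k) \in \mathcal{U}$, $y(k) \in \mathcal{Y}$ are unchanged. The objective $\sum_{k=0}^{N-1} \Vert y(k) - r(t+k)\Vert_Q^2 + \Vert u(k)\Vert_R^2$ is likewise unchanged. Comparing term by term with \eqref{eq:mpc}, the two problems have identical decision variables $(u,x,y)$, identical constraints, and identical cost, so they are the same optimization problem; hence their feasible sets, optimal values, and optimizers coincide.

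The only point requiring a little care — and the main (minor) obstacle — is making precise what ``$A_y$, $C_y$ empty'' means and confirming that Assumption~\ref{as:transform} is vacuously satisfied when $n_u = p_u = 0$: with no unknown states, the blocks $A_c, C_c$ have zero columns, so \eqref{eq:transforma}–\eqref{eq:transformc} hold trivially with $A_y, C_y$ of zero width, and the terms $A_y y_u(k)$, $C_y y_u(k)$ in \eqref{eq:hdpc} are zero-dimensional and vanish. Once this bookkeeping is dispatched, the proof is a direct substitution, exactly parallel to Proposition~\ref{prop:deepc}, and I would write it in one or two sentences.
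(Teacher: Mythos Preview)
Your proposal is correct and follows essentially the same approach as the paper's proof: remove the data-based constraint, note that $y_u$ is empty so all $y_u$ terms vanish, and substitute $x=x_\kappa$, $y=y_\kappa$, $A=A_\kappa$, $B=B_\kappa$, $C=C_\kappa$, $D=D_\kappa$ to recover \eqref{eq:mpc}. Your extra remark that Assumption~\ref{as:transform} is vacuously satisfied when $n_u=p_u=0$ is a nice bit of bookkeeping the paper leaves implicit.
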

\begin{proof}
Given the proposition, the data-based system representation (constraints) in \eqref{eq:hdpc} are removed. Since there cannot be outputs based on unknown states, $y_\kappa = y$ and $y_u$ is empty. Removing all $y_u$ terms and substituting $x = x_\kappa$, $A = A_\kappa, B = B_\kappa, C = C_\kappa, D=D_\kappa$ (since there are no unknown dynamics) in \eqref{eq:hdpc} completes the proof.
\end{proof}

\begin{remark}
\jdw{\textit{Relationship between DeePC, HDeePC, and MPC:}} Propositions \ref{prop:deepc} and \ref{prop:mpc} show that the HDeePC problem \eqref{eq:hdpc} generalizes predictive control between the two extreme cases of DeePC (no explicit model knowledge used) and MPC (no data used). 
\end{remark}

\subsection{EQUIVALENCE PROOF FOR THE LINEAR, NOISELESS CASE}
Under certain assumptions, we prove that the optimal control sequence $u^*$ which is solution to \eqref{eq:hdpc} is equivalent to that of \eqref{eq:mpc} (and therefore also of \eqref{eq:deepc}, as proven in \cite{coulson2019}). This is an intuitive result given \cite[Theorem 5.1 and Corollary 5.1]{coulson2019}, but is given for completeness. \jw{We state additional assumptions first relating to the persistency of excitation of the collected data and the partial model knowledge.}

\begin{assumption}\label{as:pe}
\jw{We assume that the (noise-free) data collected from system \eqref{eq:ss} $\col(u^d, y_u^d)$ in $\col(U_P,Y_{U,P},U_F,Y_{U,F})$ is such that $u^d$ is persistently exciting of order $T_{ini} + N + \mathbf{n}(\jw{G})$ with $T_{ini} \geq \mathbf{l}(\jw{G})$.}
\end{assumption}

\begin{assumption}\label{as:knowntx}
\jw{We assume that $\hat x = x_{ini}$, and that $A_c, A_\kappa,A_y,$ $B_\kappa,C_\kappa,C_y,D_\kappa$ in \eqref{eq:ss} are known.}
\end{assumption}

\begin{theorem}[Feasible Set Equivalence]\label{thm:fse}
Consider the hybrid data-enabled predictive control optimization problem \eqref{eq:hdpc} in conjunction with a controllable LTI system $\jw{G}$ of the form \eqref{eq:ss} \jw{which satisfies Assumption \ref{as:transform}. Assume that Assumptions \ref{as:pe} and \ref{as:knowntx} hold.} Then the feasible set of \eqref{eq:hdpc} is equivalent to that of \eqref{eq:mpc} and \eqref{eq:deepc}.
\end{theorem}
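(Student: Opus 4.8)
The plan is to show the two set inclusions by constructing, from any feasible point of one problem, a corresponding feasible point of the other with the \emph{same} $(u, y)$. The cleanest route is to pass through the MPC feasible set as an intermediary, using the Fundamental Lemma to bridge the data-based description of the unknown part with an underlying state-space description. First I would fix notation: a feasible point of \eqref{eq:hdpc} is a tuple $(g, u, x_\kappa, y_u, y_\kappa)$, a feasible point of \eqref{eq:mpc} is $(u, x, y)$, and a feasible point of \eqref{eq:deepc} is $(g, u, y)$; since \eqref{eq:mpc} and \eqref{eq:deepc} have equivalent feasible sets by \cite[Theorem 5.1]{coulson2019} (which applies under Assumption \ref{as:pe} and the controllability hypothesis), it suffices to prove feasible-set equivalence of \eqref{eq:hdpc} and \eqref{eq:mpc} in the $(u,y)$ coordinates.

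For the direction ``\eqref{eq:mpc} feasible $\Rightarrow$ \eqref{eq:hdpc} feasible'': given $(u, x, y)$ feasible for \eqref{eq:mpc}, partition $x = \col(x_u, x_\kappa)$ and $y = \col(y_u, y_\kappa)$ according to \eqref{eq:ss}. The trajectory $\col(u_{ini}, y_{u,ini}, u, y_u)$ is a length-$(T_{ini}+N)$ input/output trajectory of the system whose outputs are only the $y_u$-channels (driven by the same state dynamics \eqref{eq:ss0} and hence sharing the initial condition $\hat x(t) = x_{ini}$ by Assumption \ref{as:knowntx}); because $u^d$ is persistently exciting of order $T_{ini}+N+\mathbf{n}(G)$ with $T_{ini}\ge \mathbf{l}(G)$ (Assumption \ref{as:pe}), the Fundamental Lemma guarantees a $g$ satisfying \eqref{eq:hdpcdata}. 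It remains to check that this same $x_\kappa, y_u, y_\kappa$ satisfy the model-based equalities in \eqref{eq:hdpc}: substitute \eqref{eq:transform} into the known rows of \eqref{eq:ss}, i.e. $A_c x_u = A_y C_u x_u = A_y(y_u - C_f x_\kappa - D_u u) = A_y y_u$ using \eqref{eq:transforma}, and similarly $C_c x_u = C_y y_u$ using \eqref{eq:transformc}; this converts the true state recursion for $x_\kappa$ and the true output equation for $y_\kappa$ exactly into the constraints of \eqref{eq:hdpc}. Input/output constraints carry over verbatim, so $(g,u,x_\kappa,y_u,y_\kappa)$ is feasible for \eqref{eq:hdpc}.

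For the converse ``\eqref{eq:hdpc} feasible $\Rightarrow$ \eqref{eq:mpc} feasible'': given $(g, u, x_\kappa, y_u, y_\kappa)$ feasible for \eqref{eq:hdpc}, the data equality \eqref{eq:hdpcdata} together with the Fundamental Lemma (Assumption \ref{as:pe}) implies $\col(u_{ini}, y_{u,ini}, u, y_u)$ is a genuine trajectory of \eqref{eq:ss0} restricted to the $y_u$-outputs; hence there is a state sequence $x = \col(x_u, x_\kappa')$ with $x(0) = x_{ini} = \hat x(t)$ obeying \eqref{eq:ss0} and producing exactly $y_u$ on the unknown channels. I would then argue $x_\kappa' = x_\kappa$ and that the $y_\kappa$ produced by \eqref{eq:ss0} equals the $y_\kappa$ of the HDeePC point: using \eqref{eq:transform} as above, the true recursion gives $x_\kappa'(k+1) = A_c x_u(k) + A_\kappa x_\kappa'(k) + B_\kappa u(k) = A_y y_u(k) + A_\kappa x_\kappa'(k) + B_\kappa u(k)$, which is the \emph{same} affine recursion in $x_\kappa'$ as the HDeePC constraint is in $x_\kappa$, with the same input $A_y y_u(k) + B_\kappa u(k)$ and — since $\hat x_\kappa(t)$ is the $\kappa$-block of $x_{ini}$ by Assumption \ref{as:knowntx} — the same initial value; by induction $x_\kappa' \equiv x_\kappa$, and then the output rows force the $y_\kappa$'s to coincide. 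Thus $(u, x, y)$ with $y = \col(y_u, y_\kappa)$ satisfies every equality of \eqref{eq:mpc}, and the constraint sets transfer directly, giving feasibility for \eqref{eq:mpc}.

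The main obstacle — and the step deserving the most care — is the identification $x_\kappa' = x_\kappa$ in the converse direction: it hinges on showing that the ``true'' $x_u$ sequence recovered from the Fundamental Lemma is the \emph{same} $x_u$ that makes $A_y y_u = A_c x_u$ hold, which in turn requires that $y_u$ determines $A_c x_u$ unambiguously — precisely what \eqref{eq:transforma} encodes, and why Remark \ref{rmk:transform}'s full-column-rank / zero-$C_f, D_u$ discussion matters. I would be careful that the state realization extracted via the Fundamental Lemma is on the same coordinates as \eqref{eq:ss} (minimality of $G$ and the $T_{ini}\ge\mathbf{l}(G)$ condition ensure the $T_{ini}$-step past window pins down the state), and that ``controllable'' in the theorem statement supplies the reachability needed for the DeePC$\leftrightarrow$MPC equivalence quoted from \cite{coulson2019}. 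Everything else is bookkeeping of the block partitions.
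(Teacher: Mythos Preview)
Your proposal is correct and rests on exactly the same two ingredients the paper uses: the Fundamental Lemma (via \cite[Theorem~5.1]{coulson2019}) to identify the data-based constraint \eqref{eq:hdpcdata} with the state-space description of the $y_u$-channels, and Assumption~\ref{as:transform} to replace $A_c x_u$ and $C_c x_u$ by $A_y y_u$ and $C_y y_u$ in the known-part recursion. The only difference is packaging: the paper unrolls both constraint blocks directly into $y=\mathscr{O}_N(A,C)x_{ini}+\mathscr{T}_N(A,B,C,D)u$ and matches this to the MPC feasible set in one stroke, whereas you construct the two inclusions explicitly and use an induction on $x_\kappa$ for the converse --- the mathematical content is identical.
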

\begin{proof}
Please see the Appendix.
\end{proof}

\begin{corollary}[Equivalent Closed-Loop Behavior] \label{cor:eclb}
Consider Algorithm 1 with $Q \geq 0$, $R > 0$, and $\mathcal{U},{Y}$ convex, non-empty. Under the assumptions in Theorem \ref{thm:fse}, Algorithm 1 results in equivalent closed-loop behavior to \cite[Algorithm 1]{coulson2019}.
\end{corollary}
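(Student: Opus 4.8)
The plan is to reduce the claim to Theorem~\ref{thm:fse} together with the observation that the three problems \eqref{eq:hdpc}, \eqref{eq:deepc} and \eqref{eq:mpc} carry the \emph{same} objective, which is a function of the input/output pair $(u,y)$ only — not of $g$, $x$, or $x_\kappa$. Although the feasible sets of the three problems live in different variable spaces, Theorem~\ref{thm:fse} is naturally read as equivalence of their projections onto the shared variables $(u,y)$: the set of pairs $(u,y)$ extensible to a feasible point is the same for all three. Hence the optimal values agree and the sets of optimal $(u,y)$ agree. Since $R>0$, the cost $\sum_{k=0}^{N-1}\Vert y(k)-r(t+k)\Vert_Q^2+\Vert u(k)\Vert_R^2$ is strictly convex in $u$ over the (convex, and nonempty by controllability and Assumption~\ref{as:pe}) feasible set, so the minimizing input sequence $u^*=(u^*(0),\dots,u^*(N-1))$ is unique and is common to all three problems; in particular no tie-breaking rule is needed.

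Next I would close the loop by induction on the outer time index $t$. At $t=0$ the initial data supplied to the three algorithms describe the same physical situation: Assumption~\ref{as:knowntx} gives $\hat x(t)=x_{ini}$, so $\hat x_\kappa(t)$ in \eqref{eq:hdpc} is the true partial state, and $(u_{ini},y_{u,ini})$ are the true past input/output records, which by the Fundamental Lemma pin down the same past trajectory seen by \eqref{eq:deepc}. Therefore the optimizers coincide and Algorithm~\ref{alg:cap} applies exactly the same $(u^*(0),\dots,u^*(s))$ as \cite[Algorithm~1]{coulson2019}. Applying this common input to the true LTI system \eqref{eq:ss} produces the same next state, the same output, and hence consistent updated data $(u_{ini},y_{u,ini},\hat x_\kappa)$; the induction hypothesis is restored and the argument repeats for all $t$. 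Chaining through \cite[Corollary~5.1]{coulson2019}, which already identifies the DeePC and MPC closed loops, yields the stated equivalence.

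The main obstacle is not any single estimate but verifying that the hypotheses of Theorem~\ref{thm:fse} are preserved along the closed-loop run, so that the theorem may be reinvoked at every step. Concretely: (i) the offline Hankel data $\col(U_P,Y_{U,P},U_F,Y_{U,F})$, and thus the persistency-of-excitation condition of Assumption~\ref{as:pe}, are fixed and unaffected by the online evolution; (ii) in the noiseless setting with exact measurements, $\hat x_\kappa(t)=x_\kappa(t)$ continues to hold at every $t$, so Assumption~\ref{as:knowntx} propagates; and (iii) nonemptiness of the feasible set at each step follows, exactly as in the proof of Theorem~\ref{thm:fse}, from controllability of $G$ together with a feasible lift of the true horizon-$N$ trajectory. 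Granting these, the corollary is immediate: identical feasible sets (in $(u,y)$) plus an identical strictly convex objective force identical receding-horizon inputs, hence identical closed-loop trajectories.
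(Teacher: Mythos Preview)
Your proposal is correct and follows essentially the same approach as the paper, which simply states that given Theorem~\ref{thm:fse} the proof is equivalent to that of \cite[Corollary~5.1]{coulson2019} and is omitted. You have merely unpacked what that deferred argument actually contains: identical projected feasible sets in $(u,y)$, an identical objective depending only on $(u,y)$, strict convexity in $u$ from $R>0$ to force a unique optimal input, and an induction on $t$ to propagate the equivalence along the closed loop.
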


\begin{proof}
Given feasible set equivalence (Theorem \ref{thm:fse}), the proof is equivalent to that of \cite[Corollary 1]{coulson2019} and is therefore omitted. 
\end{proof}

\begin{remark}
\jw{\jdw{\textit{Recursive feasibility:}} As our formulation \eqref{eq:hdpc} does not include terminal ingredients, it is not possible to prove recursive feasibility in general (similarly to the original DeePC formulation in \cite{coulson2019}). However, in the noiseless case, since HDeePC, DeePC and MPC all have equivalent closed-loop behavior (Corollary \ref{cor:eclb}), adding e.g. an appropriate terminal constraint would allow recursive feasibility to be proven by applying conventional methods from MPC with terminal constraints, similarly to \cite[Section III.B]{berberich2021}) and subject to typical assumptions. Proving recursive feasibility in the noisy case is left to future work.}
\end{remark}

\begin{remark}
    \jdw{\textit{Measurement noise and model mismatch:} While Theorem 1 and Corollary 1 establish equivalence under ideal assumptions, in practice both measurement noise and model mismatch may affect the Hankel representation and the parametric subsystem. Their systematic treatment, possibly via robust MPC (e.g. \cite{kuntz2024}) or robust data-driven predictive control (e.g., \cite{berberich2021, huang2023}) techniques, remains an important direction for future work. In particular, rigorous guarantees that simultaneously address DeePC robustness with respect to noisy data in the Hankel matrices, and MPC robustness to plant-model mismatch are still lacking. Possible practical approaches include, for example, distributionally robust optimization \cite{li2025mdrdeepcmodelinspireddistributionallyrobust}.}
\end{remark}

\subsection{\jw{COMPUTATIONAL CONSIDERATIONS}}\label{sect:comp}

Comparing \eqref{eq:deepc} and \eqref{eq:hdpc}, the dimension of the optimization problem is changed (usually reduced in HDeePC, e.g. a reduction of 50 equality constraints in the first case study (Section \ref{sect:bess})). 
However, as with both DeePC and MPC, it is possible to condense HDeePC to a single decision variable $g$ via the equality constraints. We present a comparison of computational complexity (number of decision variables and constraints) between HDeePC and DeePC in this case. The DeePC formulation \eqref{eq:deepc} may be condensed and simplified (without bounds on $u$ and $y$, which would equivalent for both cases) as:
\begin{equation}\label{eq:condenseddpc}
\begin{aligned}
\min_{g} \quad & \Vert Y_Fg - r \Vert_\mathcal{Q}^2 + \Vert U_Fg\Vert_\mathcal{R}^2\\
\textrm{s.t.} \quad & \begin{bmatrix} U_P \\ Y_{P}\end{bmatrix}g = \begin{bmatrix} u_{ini} \\ y_{ini}\end{bmatrix}
\end{aligned}
\end{equation}

where $\mathcal{Q} \in \mathbb{R}^{pN\times pN}$ and $\mathcal{R} \in \mathbb{R}^{mN\times mN}$ are weighting matrices. To formulate the corresponding condensed HDeePC formulation, we can condense the model part via $u$ and therefore $g$, which we denote as $y_\kappa = h(x_{ini},g)$\footnote{The function $h (x_{ini}, g)$ is a linear function of the measured initial state $x_{ini}$ and $g$ as follows: $h (x_{ini}, g) = \mathscr{O}_N(A, \begin{bmatrix} C_c & C_\kappa \end{bmatrix})x_{ini} + \mathscr{T}_N(A, B, \begin{bmatrix} C_c & C_\kappa\end{bmatrix}, D_\kappa)U_Fg$.}:

\begin{equation}\label{eq:hdpc_condensed}
\begin{aligned}
\min_{g} \quad \Vert \begin{bmatrix} Y_{U,F}g \\ h (x_{ini}, g)\end{bmatrix}  &- r \Vert_\mathcal{Q}^2 + \Vert U_Fg\Vert_\mathcal{Q}^2\\
\textrm{s.t.} \quad \begin{bmatrix} U_P \\ Y_{U,P}\end{bmatrix}g &= \begin{bmatrix} u_{ini} \\ y_{u,ini}\end{bmatrix}
\end{aligned}
\end{equation}

While the decision variables are now reduced to the vector $g$ in both cases, DeePC has additional equality constraints from the larger size of $Y_P$ compared to $Y_{U,P}$. The precise dimensions (decision variables and constraints) of the problem are: 
\begin{itemize}
    \item DeePC: $T-T_{ini}-N+1$ decision variables, $(m+p)T_{ini}$ equality constraints.
    \item HDeePC: $T-T_{ini}-N+1$ decision variables, $(m+p_u)T_{ini}$ equality constraints.
\end{itemize}
Since $p_u < p$, the HDeePC problem has lower dimension, \jdw{and adding additional model knowledge (i.e. increasing $p_\kappa$ which decreases $p_u$ by the same amount) results in lower problem dimension by exactly $T_{ini}$ for each additionally known output equation.} Note however the linear equations in $h (\hat x_{ini}, g)$ do need to be computed for HDeePC and this computational expense will depend on the properties of the known dynamics. Our empirical results in Section \ref{sect:bess} suggest that HDeePC still generally preserves a computational advantage. 


\subsection{NONLINEAR SYSTEMS}

\jdw{In this section we show that HDeePC can be extended to a restricted class of nonlinear systems. }\jw{Consider the system \eqref{eq:nlss}, which may be viewed as a nonlinear extension of \eqref{eq:ss}:}
\jw{\begin{subequations}\label{eq:nlss}
\begin{align}
    x_u(t+1) &= A_ux_u(t) + A_fx_\kappa(t) + B_uu(t) \label{eq:nlssa}\\
    x_\kappa(t+1) &= f_\kappa(x_\kappa(t), y_u(t), u(t)) \\
    y_u(t) &= C_ux_u(t) + D_uu(t) \label{eq:nlssc}\\
    y_\kappa(t) &= g_\kappa(x_\kappa(t), y_u(t), u(t))
\end{align}
\end{subequations}}
where $f_\kappa$, $g_\kappa$ denote possibly non-linear functions. \jdw{We make an assumption which equates to the unknown dynamics being LTI in order to apply HDeePC.}

\begin{assumption} \label{as:nlequiv}
\jw{The system \eqref{eq:nlss} is linear time-invariant from input $u$ to output $y_u$.}
\end{assumption}

\begin{remark}
    \jw{\jdw{\textit{Restrictiveness of Assumption \ref{as:nlequiv}:}}  Assumption \ref{as:nlequiv} can be satisfied, e.g. if $A_f = 0$ or if $f_\kappa$ is linear. The idea is (if possible) to take the nonlinearity out of the data-driven part completely and represent the nonlinearity only in the known part. Assumption \ref{as:nlequiv} prevents couplings between unknown states and known nonlinear states, in order for the Fundamental Lemma to hold for the behavioral part of the system. This is a restricted class of nonlinear systems, but not without practical relevance, e.g. as seen in Case study 1b.}
\end{remark}

\jw{The MPC optimization problem for this case is:}
\jw{\begin{equation}\label{eq:mpcnl}
\begin{aligned}
\min_{g,u,x_\kappa,y_u, y_\kappa} \quad & \sum_{k=0}^{N-1} \Vert y(k) - r(t+k) \Vert_Q^2 + \Vert u(k)\Vert_R^2\\
\textrm{s.t.} \quad
  x(0) &= \hat x(t) \\
  \eqref{eq:nlss} &\text{ holds}, \forall k \in \{0, ..., N-1\}  \\
  u(k) &\in \mathcal{U{}}, \forall k \in \{0, ..., N-1\}    \\
  y(k)&\in \mathcal{Y{}}, \forall k \in \{0, ..., N-1\}    \\
\end{aligned}
\end{equation}}

The HDeePC optimization problem\footnote{\jw{It should be noted that both \eqref{eq:mpcnl} and \eqref{eq:hdpcnl} \jdw{are} now non-convex problems due to the addition of non-linear equality constraints.}} for this case is:
\begin{equation}\label{eq:hdpcnl}
\begin{aligned}
\min_{g,u,x_\kappa,y_u, y_\kappa} \quad & \sum_{k=0}^{N-1} \Vert y(k) - r(t+k) \Vert_Q^2 + \Vert u(k)\Vert_R^2\\
\textrm{s.t.} \quad &\begin{bmatrix} U_P \\ Y_{U,P} \\ U_F \\ Y_{U,F}\end{bmatrix}g = \begin{bmatrix} u_{ini}  \\ y_{u,ini} \\ u \\ y_u\end{bmatrix}\\
  x_\kappa(0) &= \hat x_\kappa(t) \\
  x_\kappa(k+1) &= f_\kappa(x_\kappa(k), y_u(k), u(k)), \forall k \in \{0, ..., N-1\}    \\
  y_\kappa(k) &= g_\kappa(x_\kappa(k), y_u(k), u(k)), \forall k \in \{0, ..., N-1\}  \\
  u(k) &\in \mathcal{U{}}, \forall k \in \{0, ..., N-1\}    \\
  y(k)&\in \mathcal{Y{}}, \forall k \in \{0, ..., N-1\}    \\
\end{aligned}
\end{equation}

\jdw{We are now ready to present the main result for nonlinear HDeePC in Proposition \ref{thm:nlequiv}.}

\begin{proposition} \label{thm:nlequiv}
Assume noise-free, persistently exciting (of a sufficient order) data $\col(u^d, y_u^d)$ in $\col(U_P,Y_{U,P},U_F,Y_{U,F})$ is collected from \eqref{eq:nlss}. Then, the feasible sets of  \eqref{eq:mpcnl} and \eqref{eq:hdpcnl} are equivalent.
\end{proposition}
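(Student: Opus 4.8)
The plan is to reduce the claimed equivalence to the linear result already established in Theorem \ref{thm:fse}, exploiting Assumption \ref{as:nlequiv}. The key observation is that both \eqref{eq:mpcnl} and \eqref{eq:hdpcnl} share an identical nonlinear ``known'' block (the $f_\kappa$ and $g_\kappa$ equations, the initial condition $x_\kappa(0) = \hat x_\kappa(t)$, and the constraint sets $\mathcal{U},\mathcal{Y}$), and differ only in how the pair $(u, y_u)$ is constrained: in \eqref{eq:mpcnl} by the linear state equations \eqref{eq:nlssa}, \eqref{eq:nlssc} (together with the $x_u$-dynamics), and in \eqref{eq:hdpcnl} by the Hankel-matrix data equation. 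So the whole argument hinges on showing that, given $\hat x(t)$, the set of length-$N$ input/output trajectories $(u, y_u)$ admissible under the linear subsystem equations equals the set admissible under the data equation.

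First I would invoke Assumption \ref{as:nlequiv}: the map $u \mapsto y_u$ is realized by an LTI system. Concretely, \eqref{eq:nlssa} and \eqref{eq:nlssc} already describe an LTI system $(A_u, [\,A_f\; B_u\,], C_u, [\,0\; D_u\,])$ driven by $(x_\kappa, u)$ — but Assumption \ref{as:nlequiv} states that the composite input-output behavior from $u$ to $y_u$ is LTI, meaning the $A_f x_\kappa$ coupling term either vanishes or is itself generated by an LTI process, so that after eliminating internal variables there is a genuine LTI realization, call it $G_u$, with $y_u$ its output. (This is exactly the content of the remark following Assumption \ref{as:nlequiv}: the nonlinearity has been confined to the known block.) Then I would apply the Fundamental Lemma, precisely as in the proof of Theorem \ref{thm:fse}: under the stated persistency-of-excitation and minimal-$T_{ini}$ hypotheses, a trajectory $\col(u_{ini}, y_{u,ini}, u, y_u)$ satisfies \eqref{eq:hdpcdata} for some $g$ if and only if it is a trajectory of $G_u$, and under Assumption \ref{as:knowntx}-type matching of the initial window (which the proposition folds into ``persistently exciting of a sufficient order'' plus the shared $\hat x(t)$) this is equivalent to the linear state-space constraints in \eqref{eq:mpcnl} holding with the given $x_u(0)$. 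This gives a bijection between the $(u, y_u)$-components of the two feasible sets.

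Finally I would lift this to full feasible-set equivalence: take any point of the feasible set of \eqref{eq:hdpcnl}, which supplies $(g, u, x_\kappa, y_u, y_\kappa)$; the $(u, y_u)$ part is a trajectory of $G_u$, hence extends to a solution of the linear equations in \eqref{eq:mpcnl} with the same $x_u(0)$ dictated by $\hat x(t)$; the $x_\kappa, y_\kappa$ variables and the constraints $u(k)\in\mathcal{U}$, $y(k)\in\mathcal{Y}$ carry over verbatim because the known block and constraint sets are identical in the two problems; so the projection onto $(u, x_\kappa, y_u, y_\kappa)$ is feasible for \eqref{eq:mpcnl}. The reverse inclusion is symmetric, using the ``if'' direction of the Fundamental Lemma to produce a certificate $g$. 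Hence the feasible sets coincide (identifying the two problems on their common variables $(u, x_\kappa, y_u, y_\kappa)$, with $g$ existentially quantified on the HDeePC side).

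The main obstacle is making precise the step where Assumption \ref{as:nlequiv} is used to guarantee that the data equation \eqref{eq:hdpcdata} really does characterize the same $(u, y_u)$-behavior as the linear equations \eqref{eq:nlssa}, \eqref{eq:nlssc} in closed loop with the nonlinear $x_\kappa$-dynamics. One must argue that even though the offline data $\col(u^d, y_u^d)$ was generated by the full \emph{nonlinear} system \eqref{eq:nlss}, the $u \mapsto y_u$ restriction of that data is genuinely an LTI trajectory (so the Fundamental Lemma applies to it), and that the same LTI behavior is what \eqref{eq:nlssa}--\eqref{eq:nlssc} impose inside the optimization — i.e., that confining the nonlinearity to $x_\kappa$ does not let it leak back into $y_u$ through the feedback $y_u \to x_\kappa$. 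This is where the structural hypothesis ($A_f = 0$ or $f_\kappa$ linear) is doing the real work, and the proof should state this reduction explicitly rather than treat it as routine; the remaining bookkeeping is then a direct transcription of the Appendix argument for Theorem \ref{thm:fse}.
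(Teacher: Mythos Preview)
Your proposal is correct and follows essentially the same approach as the paper: the paper's proof is a one-line observation that, under Assumption~\ref{as:nlequiv}, the data-based constraint \eqref{eq:hdpcdata} is equivalent to the linear equations \eqref{eq:nlssa}, \eqref{eq:nlssc} in the noiseless case, with all other constraints identical between \eqref{eq:mpcnl} and \eqref{eq:hdpcnl}. Your version unpacks this considerably more than the paper does---in particular your closing paragraph about whether the offline data generated by the full nonlinear system is genuinely LTI data for the $u \mapsto y_u$ restriction is a legitimate point that the paper leaves implicit in Assumption~\ref{as:nlequiv}---but the core argument is the same.
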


\begin{proof}
\jw{Proposition \ref{thm:nlequiv} follows straightforwardly from the equivalence of the data-based representation \eqref{eq:hdpcdata} to \eqref{eq:nlssa}, \eqref{eq:nlssc} in the noiseless case.}
\end{proof}

\begin{remark}
    \jdw{\textit{General nonlinear HDeePC:} Despite the restrictiveness of the theoretical conditions (Assumption \ref{as:nlequiv}), nonlinear HDeePC \eqref{eq:hdpcnl} can in practice be applied to nonlinear systems which do not satisfy Assumption \ref{as:nlequiv}. Although theoretical guarantees are not available in this case, empirical results (as illustrated in Case studies 1b and 1c for example) are promising.}
\end{remark}

\section{CASE STUDIES}\label{cs}

The code for all case studies is available in the accompanying repository\footnote{https://github.com/jerrydonaldwatson/HDeePC}. All examples are solved by CVX (SDPT3; \jdw{Mosek for the larger example in Case study 3 due to better scaling and numerical stability}) in MATLAB 2023b on \jdw{an} Intel(R) Core(TM) i7-10750 @ 2.60GHz with 16GB RAM for \jdw{all methods. Note} that regularization and a slack variable $\sigma_y$ are added to both DeePC and HDeePC formulations. A variety of regularization terms were tried (an example of a different regularization is given on GitHub) and in these examples this choice did not significantly affect the computational expense for both DeePC and HDeePC. \jdw{Our system ID + MPC implementation uses subspace identification (n4sid) with the true state dimension given.}

\subsection{CASE STUDY 1a: BESS CONTROL IN DC MICROGRID (LINEAR CASE)}\label{sect:bess}

We consider a single-node (capacitance = 1 mF) DC microgrid connected to the main grid (constant voltage = 400 V) via a resistive-inductive (R = $2\Omega$, L = 5 mH) line. When the system is discretized (time-step of 1 ms), the following discrete-time system is obtained where $u_2$ is the known random ($\mathcal{N}(0,1)$ Amps, smoothed with a moving average filter of 10 samples) generation/load fluctuation (disturbance):
\begin{equation}\label{eq:ex1ss}
\begin{split}
    [A|B]     &= \left[\begin{array}{ccc|cc}
0.98 & 1 & 0 &  1 & 1 \\
-0.2 & 0.6 & 0 & 0 & 0 \\
0 &0 & 1 & -10^{-3}\tau_q^{-1} & 0 \\
    \end{array}
    \right] \\
    C &= \begin{bmatrix}
        1 & 0 & 0 \\
        0 & 0 & 1
    \end{bmatrix}, \quad D = 0_{2\times 2}.
\end{split}
\end{equation}

The output is subject to $\mathcal{N}(0,10^{-6})$ measurement noise. \jdw{This example primarily illustrates the difficulty of applying DeePC to a system with widely differing time-scales, although there is a modest computational benefit for HDeePC compared to DeePC as well. The example includes fast network voltage/current dynamics ($x_1, x_2$) and slow BESS state-of-charge $x_3$ dynamics. Although the difference in time-scales is significant,} DeePC ($N = 10, T = 200, T_{ini} = 50, Q = \diag(10^{-3},5\times 10^4), R = 10^{-3}, \lambda_g = 1, \lambda_y = 10^6)$ $\ell_1$-norm regularization is able to control this system satisfactorily when $\tau_q = 10^3$, reducing the state-of-charge toward the desired reference (0.5) while satisfying constraints (max BESS current is 5A and max node voltage deviation is $\pm 20$V) and minimizing costs. However, increasing the BESS capacity $\tau_q$ from $10^3$ to $10^4$ results in failure to track the desired state-of-charge (Fig. \ref{fig:bess3}) for DeePC. In contrast, our proposed method, HDeePC, is able to cope with this and \jdw{correctly keeps the BESS output current high (close to 5A, although limited by both current and voltage constraints) to discharge the battery as desired. HDeePC} is also computationally faster as predicted by Section \ref{sect:comp}. DeePC took \jw{an average of 1.141 seconds to solve each problem} while HDeePC took 1.003 seconds\footnote{The reported solve times are broadly consistent with those observed for SDPT3 on problems of similar size \cite{Toh2012}, especially when accounting for the overhead introduced by CVX’s high-level modeling and parsing. While more efficient implementations (e.g., direct solver calls or vectorized formulations) would likely reduce computation time, the results remain representative for prototyping and illustrative purposes within this framework.}, an 11.9\% reduction in computational time. \jdw{HDeePC was also compared to subspace ID + MPC, which took 1.029 seconds with a similar average cost\footnote{\jdw{The average cost is $2.1919\times 10^3$ for DeePC, $2.1895\times 10^3$ for HDeePC, and $2.1894 \times 10^3$ for subspace ID + MPC. Most of the cost is incurred by the state-of-charge error which cannot be reduced quickly}.}. } 
\begin{figure}[!ht]
    \centering
    \includegraphics[width=0.5\textwidth]{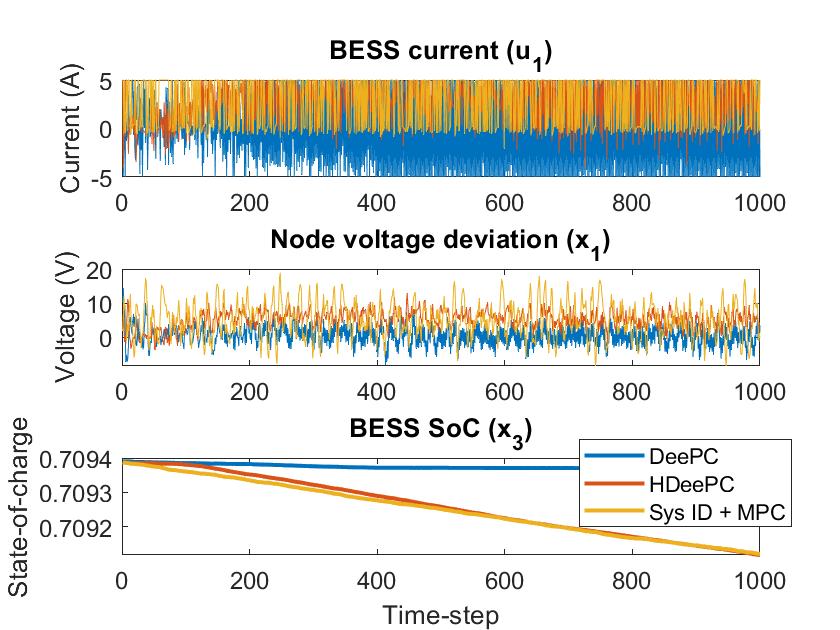}
    \caption{Case study 1a: BESS control using DeePC and HDeePC with a BESS SoC time constant of \jdw{$\tau_q=10^4 s^{-1}$.}}
    \label{fig:bess3}
\end{figure}

With quadratic rather than $\ell_1$-norm regularization \cite{MATTSSON2023625}, HDeePC again may be advantageous over DeePC: with $Q(2,2)$ increased to $5\times 10^6$, DeePC fails in reducing the SoC towards its reference value (0.5) when $\tau_q$ is increased to $10^6$, while HDeePC succeeds. A small computational advantage for HDeePC over DeePC also exists (a 4.0\% reduction in computational time in our experiments).

\subsection{CASE STUDY 1b: BESS CONTROL IN DC MICROGRID (NONLINEAR)}\label{sect:bessnl}

\jw{We now replace the third state equation in \eqref{eq:ex1ss} ($x_3(t+1) = x_3(t) - 10^{-3}\tau_q^{-1}u_1(t)$) with a (more realistic) nonlinear version, i.e. $x_3(t+1) = x_3(t) - 10^{-3}\alpha(u_1(t))\tau_q^{-1}u_1(t)$):}
\jw{\begin{equation}
    \alpha(u_1) = 
    \begin{cases}
        \frac{1}{\eta} & u_1 >= 0 \\
        \eta & \text{otherwise}
    \end{cases}
\end{equation}}
\jw{where $\eta \in [0, 1]$ is the efficiency of the BESS. We reduce $\tau_q$ to 10 in order to avoid the numerical issues in Case study 1a and both DeePC and HDeePC perform comparably for the linear case of $\eta = 1$. However, with $\eta = 0.9$, the system is now nonlinear, and DeePC fails to track the SoC reference while HDeePC \eqref{eq:hdpcnl}\footnote{\jw{For computational reasons, this is solved using an iterative convex approach. See code on GitHub: https://github.com/jerrydonaldwatson/HDeePC}} succeeds, as illustrated in Fig. \ref{fig:bessnl}. The average cost is $5.27$ for DeePC and $3.81$ for nonlinear HDeePC. \jdw{HDeePC's performance is also better than subspace ID + MPC (an average cost of $4.56$).}}
\begin{figure}[!ht]
    \centering
    \includegraphics[width=0.5\textwidth]{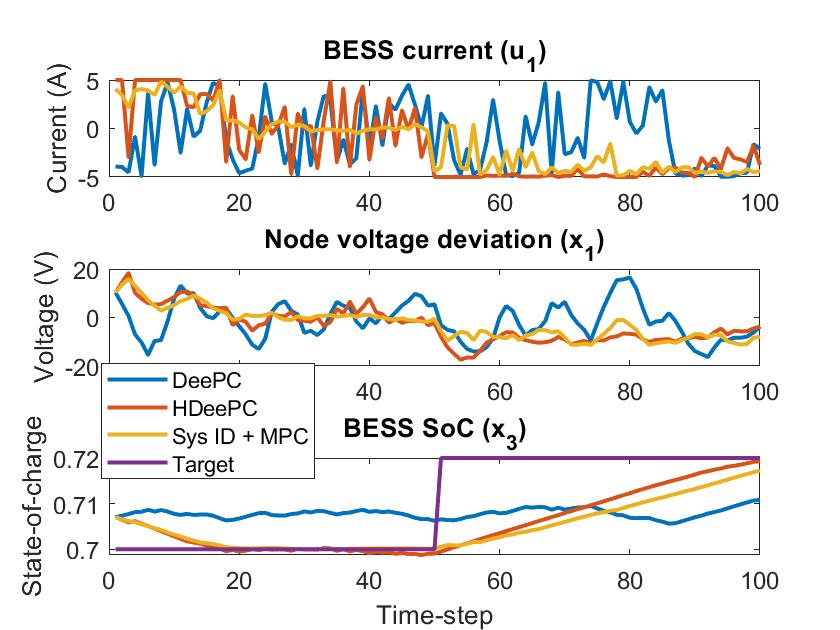}
    \caption{\jw{Case study 1b: BESS control using DeePC and HDeePC with a nonlinear model.}}
    \label{fig:bessnl}
\end{figure}

\subsection{\jdw{CASE STUDY 1C: UNKNOWN NONLINEAR DYNAMICS}}\label{sect:nl}
\jdw{We further modify the system to be more strongly nonlinear, including the unknown state equations ($x_1, x_2$ as before). In particular, the state equations are:
\begin{align*}
    x_1(t+1) &= a\sin(x_1(t)) + bx_1(t)u_1(t) + u_1(t) + u_2(t) \\
    x_2(t+1) &= a\sin(x_2(t)) + bx_2(t)u_2(t) + u_2(t) \\
    x_3(t+1) &= x_3(t) - 10^{-3}\alpha(u_1(t))\tau_q^{-1}u_1(t) \\
    y(t) &= Cx(t) 
\end{align*}
where $a = 0.9$, $b = 0.2$, and $\alpha(u_1)$, the matrix $C$, and $\tau_q$ are as defined previously.}

\begin{figure}[!ht]
    \centering
    \includegraphics[width=0.5\textwidth]{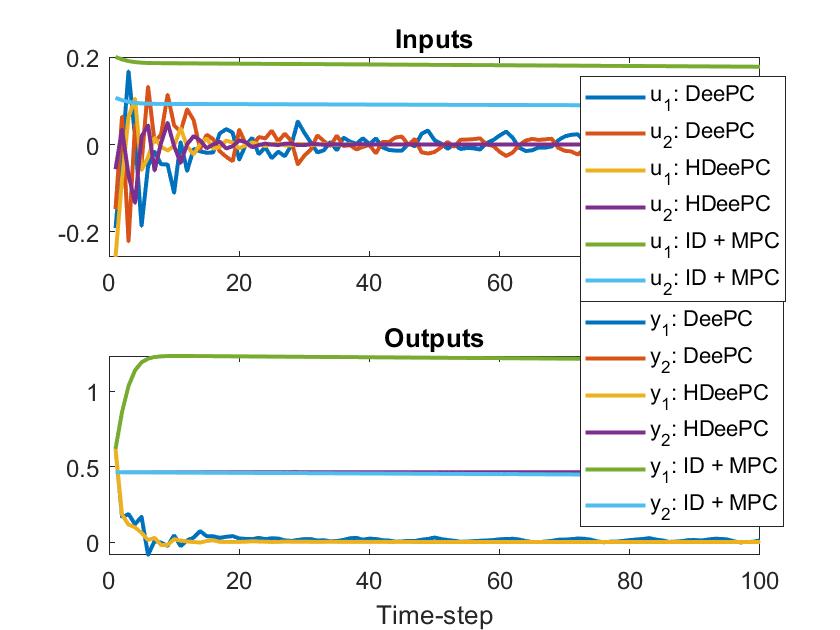}
    \caption{\jw{Case study 1b: BESS control using DeePC and HDeePC with a nonlinear model.}}
    \label{fig:bessnlxp}
\end{figure}

\jdw{With this nonlinear system, we consider the quadratic cost \( J = \sum_{k=1}^{N} \big( y(k)^\top Q y(k) + u(k)^\top R u(k) \big) \), aiming to drive \(y_1\) and \(y_2\) to zero, with weighting matrices \(Q = 10^3 I_2\) and \(R = I_2\) (see GitHub for full implementation details). Subspace ID + MPC is clearly inferior (cost = $16.67 \times 10^4$), while HDeePC slightly outperforms DeePC (total cost of $2.186 \times 10^4$ compared to $2.196 \times 10^4$) as shown in Fig. \ref{fig:bessnlxp}. Although theoretical guarantees (i.e. Proposition \ref{thm:nlequiv}) do not hold in this case as the Fundamental Lemma is not satisfied for the unknown nonlinear part, HDeePC is still able to perform well with appropriate regularization while sys ID + MPC performs poorly\footnote{\jdw{Note that if $b$ is reduced to 0.1, thus making the system less nonlinear, sys ID + MPC is now able to reach a satisfactory solution only a little worse than DeePC or HDeePC}.} DeePC performs acceptably but is inferior to HDeePC as can also be seen from the oscillatory behaviour in Fig. \ref{fig:bessnlxp}.}

\subsection{CASE STUDY 2: TRIPLE-MASS SYSTEM}

Since the states in Case study 1 are uncoupled ($A$ in \eqref{eq:ex1ss} is block diagonal), we show the applicability of HDeePC to coupled triple-mass dynamics \cite{fiedler2021} using similar parameters ($T_{ini} = 4, N = 20, T = 150, \lambda_g = 1, \lambda_u = 10^6, \lambda_y = 10^6$, measurement noise uniformly distributed in $[-2 \times 10^5, 2 \times 10^5]$). Solving this problem over 40 timesteps gives the following results: DeePC time = 66.21s, cost = 9.41; HDeePC (equations for $x_3 \dots x_8$ and $y_3$ known) 49.51s, cost = 9.25; and MPC: 45.07s, cost = 9.25, where the advantage of using HDeePC over DeePC is clear. \jdw{We additionally illustrate Remark \ref{rmk:obs} by solving this problem using a certainty-equivalence approach with a partial Luenberger (for the known states only) observer, rather than full-state measurements, and the performance is not significantly degraded: 49.90s, cost = 9.35.}

\subsubsection*{\jdw{2a:} NUMBER OF KNOWN STATES}
For this experiment, we adjust $C = I_8, D = 0_{8\times 2}, Q = I_8$ and vary the number of state / output equations which are known. \jdw{The} results in Table \ref{tab:ex2nk} illustrate that HDeePC is a compromise between MPC and \jdw{DeePC}. 

\begin{table}[!ht]\centering
\caption{Case study \jdw{2a} results for varying number of known states}
\renewcommand{\arraystretch}{1.25}
\begin{tabular}{ c| c |c }
 \hline
 No. of known state equations & Total comp. time & Cost  \\
 \hline
 None (DeePC) & 82.18s & 86.96\\
 1 & 79.81s & 84.99\\
 2 & 77.82s & 85.69\\
 3 & 51.42s & 82.04\\
 4 & 48.95s & 82.04\\
 5 & 47.60s & 82.04\\
 6 & 48.17s & 82.04\\
 7 & 47.93s & 84.12\\
 8 (MPC \jdw{- full knowledge}) & 44.30s & 82.04\\
\end{tabular}
\label{tab:ex2nk}
\end{table}

\subsubsection*{\jdw{2b:} TIME-VARYING SYSTEM EXAMPLE}
In this case, the constant matrix $A$ from the previous example is changed into a function of time with entries of $A(k)$ being randomly (i.i.d. with 10\% standard deviation) varied in two scenarios: (Scenario 1) all entries are time-varying; (Scenario 2) only the last six state equations (these are the six state equations known to HDeePC) are time-varying. Table \ref{tab:ex2tv} shows the results. The fact that HDeePC incorporates knowledge (six state equations and one output equation) about the time-varying system allows it to perform considerably better than pure DeePC. \jdw{We also compare to subspace system identification (n4sid in MATLAB) + MPC which performs poorly as the subspace system identification does not handle the time-varying nature of the plant well.} 

\begin{table}[!ht]\centering
\caption{Case study \jdw{2b} results for time-varying systems}
\renewcommand{\arraystretch}{1.25}
\begin{tabular}{ c|c| c |c }
 \hline
 Scenario & Controller & Total computational time & Cost  \\
 \hline
 1 & DeePC & 70.56s & 17.94\\
 1 & HDeePC & 47.07s & 8.63\\
 1 & MPC\tablefootnote{\jdw{Full model knowledge.}} & 46.91s & 5.73\\
 1 & \jdw{ID+MPC} & \jdw{71.30s} & \jdw{59.25}\\
 \hline
 2 & DeePC & 80.64s & 41.00\\
 2 & HDeePC & 48.31s & 28.11\\
 2 & MPC\tablefootnote{\jdw{Full model knowledge.}} & 44.42s & 23.66 \\
 2 & \jdw{ID+MPC} & \jdw{71.51s} & \jdw{72.18}\\
\end{tabular}
\label{tab:ex2tv}
\end{table}

\subsection{\jdw{CASE STUDY 3: LARGER (POWER) SYSTEM EXAMPLE}}
\jdw{In order to demonstrate that HDeePC can be successfully applied / scaled to moderate-to-large systems, we consider a 33-bus AC/DC power system (Fig. \ref{fig:cs3}) with a mix of synchronous generators (SG), grid-forming (GFM) and grid-following converters, and DC-DC converters. The (reduced-order, LTI) system model was previously presented and explained in \cite{umang2024}. The model has 68 states, 10 inputs (reference DC voltages, real power, and DC current at various buses), and to simplify the application of HDeePC we include full state measurements. The HDeePC optimization problem size depends on the number of known states but ranges from 8295 to 19107 decision variables plus constraints. The full system matrices and controller parameters etc. may be found in the code on GitHub.}

\begin{figure}[!ht]
    \centering
    \includegraphics[width=0.488\textwidth]{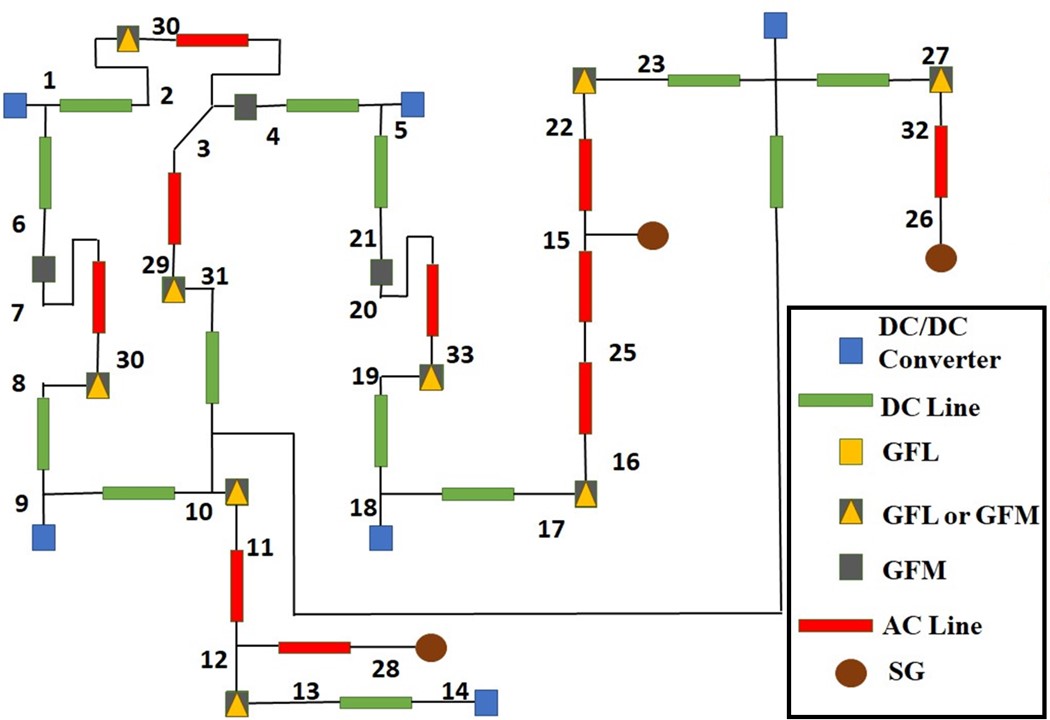}
    \caption{\jdw{Case study 3: AC/DC power system example.}}
    \label{fig:cs3}
\end{figure}

\jdw{We conduct an empirical study to evaluate the benefit of HDeePC in a larger-scale setting, varying both the number of known states and the degree of model mismatch. The simulation also includes measurement noise of magnitude (SD) $2 \times 10^{-5}$. Model mismatch is introduced in three ways: (i) parameter variation, where physical parameters such as RLC components, converter and generator time constants, and voltage levels are randomly perturbed according to specified standard deviations (SD), and the perturbed dynamical model is then used as the “true” system in simulation; (ii) multiplicative perturbation, where the state-space matrices themselves are perturbed elementwise; and (iii) low-rank additive perturbation, constructed as a rank-$r$ matrix and scaled so that its Frobenius norm equals a specified fraction of the nominal matrix norm, providing a clear measure of relative perturbation size. Results are reported in Table~\ref{tab:ex3a}, obtained using Mosek\footnote{\jdw{SDPT3 and SeDuMi also yield broadly similar results, but Mosek is clearly superior in terms of computational time and numerical stability.}} as the solver. Overall, the results show that HDeePC can significantly improve computational performance and cost relative to DeePC especially once a substantial amount of model knowledge is incorporated. At low levels of model knowledge (e.g., 75\% or more unknown states in this example), performance is less reliable, with some runs producing slightly higher costs than either pure MPC or DeePC. However, this variability may not be systematic: changing the weightings or form of the cost function often results in the scenarios with 51 or 60 unknown states outperforming DeePC; and in most cases HDeePC provides improvements over DeePC. The broader trend is that increasing model knowledge consistently reduces computational cost and often improves performance, with HDeePC converging toward MPC-like behavior in the limit. }

\begin{table*}[!ht]
\centering
\caption{\jdw{Case study 3a: larger system. Results: cost, computational time (average over 10 runs)}} 
\begin{threeparttable}
\renewcommand{\arraystretch}{1.25}
\begin{tabular}{c|c|c|c|c|c|c|c}
\hline
\jdw{Scenario} & \jdw{No mismatch} & \multicolumn{2}{c|}{\jdw{Parameter variation}} & \multicolumn{2}{c}{\jdw{Mult. Perturbation (SD)}} & \multicolumn{2}{|c}{\jdw{Additive Perturbation\tnote{a}}}\\
\hline
\jdw{\# unknown states} & \jdw{N/A} & \jdw{20\%} & \jdw{40\%} & \jdw{20\%} & \jdw{40\%} & \jdw{2\%} & \jdw{10\%}\\
\hline
\jdw{0 (MPC)} & \jdw{0.090, 1.27s} & \jdw{0.096, 2.43s} & \jdw{0.12, 1.37s} & \jdw{0.34, 2.16s} & \jdw{4.93, 1.97s} & \jdw{0.13, 1.21s} & \jdw{0.58, 1.51s}\\
\jdw{9 (13\%)} & \jdw{18.4, 4.75s} & \jdw{18.4, 8.79s} & \jdw{18.4, 5.04s} & \jdw{18.4, 8.94s} & \jdw{18.4, 5.24s} & \jdw{18.4, 7.04s} & \jdw{18.4, 5.80s}\\
\jdw{17 (25\%)} & \jdw{16.2, 6.17s} & \jdw{16.2, 11.1s} & \jdw{16.2, 6.55s} & \jdw{16.2, 7.04s} & \jdw{16.2, 6.81s} & \jdw{16.2, 6.72s} & \jdw{16.2, 7.36s}\\
\jdw{34 (50\%)} & \jdw{19.7, 9.87s} & \jdw{19.7, 17.8s} & \jdw{19.7, 10.8s} & \jdw{19.7, 10.9s} & \jdw{19.7, 10.2s} & \jdw{19.7, 11.9s} & \jdw{19.7, 14.9s}\\
\jdw{51 (75\%)} & \jdw{32.9, 18.8s} & \jdw{32.9, 34.5s} & \jdw{32.9, 19.6s} & \jdw{32.9, 21.1s} & \jdw{32.9, 19.5s} & \jdw{32.9, 22.6s} & \jdw{32.9, 29.4s}\\
\jdw{60 (88\%)} & \jdw{32.3, 59.2s} & \jdw{32.3, 91.1s} & \jdw{32.3, 54.9s} & \jdw{32.3, 62.2s} & \jdw{32.3, 58.8s} & \jdw{32.3, 71.6s} & \jdw{32.3, 73.6s}\\
\jdw{68 (DeePC)} & \jdw{32.6, 82.5s} & \jdw{32.6, 82.1s} & \jdw{32.6, 82.1s} & \jdw{32.6, 85.0s} & \jdw{32.6, 82.1s} & \jdw{32.6, 82.1s} & \jdw{32.6, 82.1s}\\
\hline
\end{tabular}
\begin{tablenotes}
    \item[a] \jdw{A random perturbation matrix is generated, with all entries set to zero except for four randomly selected positions on different rows, i.e. 4 of 68 state equations (similarly for output equations) are perturbed. The resulting sparse matrix is then scaled so that its Frobenius norm equals a specified percentage of the nominal matrix norm, and then added to the ``true" matrix.}
\end{tablenotes}
\end{threeparttable}
\label{tab:ex3a}
\end{table*}




\section{DISCUSSION} \label{discuss}

\subsection{\jdw{COMPARISON TO INDIRECT APPROACHES}}
\jdw{When applying predictive control to a plant with partially known dynamics, approaches are typically classified as indirect (system identification followed by MPC) or direct (e.g. DeePC, as there is no explicit identification step. Both approaches have advantages and disadvantages with respect to noise and bias (broadly speaking, indirect approaches are more resilient to noise but more sensitive to bias \cite{dorfler2023}). Generally speaking, in the LTI case there is no need to use direct data-driven control as LTI system identification is well established and reliable \cite{morato2024} as is consistent with our results in Case study 1a. HDeePC is a direct method that aims to improve upon DeePC (when possible) by incorporating the part of the model that is known, retaining the advantages of a direct approach. The motivation for HDeePC is particularly strong in cases where partial model knowledge captures key nonlinearities or time-varying dynamics as we show in case studies 1b, 1c and 2. These scenarios are practically relevant; the development of HDeePC was driven by challenges in applying DeePC to power system and energy storage applications. }

\subsection{\jdw{NUMBER OF KNOWN/UNKNOWN STATES}}
\jdw{In case studies 2 and especially 3, we investigated the effect of varying the order of the known subsystem (i.e. the number of known states). It should be noted that the results are somewhat dependent on regularization choices and choices of parameters such as $T, T_{ini}, N$. For the results in Tables \ref{tab:ex2nk} and \ref{tab:ex3a}, these have been kept consistent to allow a fair comparison, however we believe it is possible to tune regularization terms etc. to improve any particular result. The results in the larger example (Case study 3) clearly show the benefit of using HDeePC if enough of a partial model is known, even if this subject to reasonable levels of uncertainty.}

\subsection{\jdw{MODEL UNCERTAINTY OR MISMATCH}}
\jdw{If the model is uncertain there is a trade-off between enlarging the known subsystem and preserving the flexibility of the data-driven part. Making more of the model known reduces the dimension of the Hankel matrices, improves noise resilience, and moves the formulation closer to the efficiency and robustness of MPC, provided the model is accurate and states are measurable. On the other hand, leaving more dynamics in the data-driven block increases flexibility, allowing the controller to adapt to unmodeled effects, nonlinearities, or parameter drift without risking bias from an incorrect model. The practical balance depends on how much of the system can be modeled with sufficient confidence versus how much uncertainty or complexity is better handled by the data-based part representation. When significant model uncertainty is present, standard robust MPC techniques can be applied to the model-based component of HDeePC. Moreover, distributionally robust optimization, as proposed for DeePC in \cite{coulson2022} and for a similar ``hybrid" formulation in \cite{li2025mdrdeepcmodelinspireddistributionallyrobust}, can also be incorporated. In Case study 3, practical levels of model mismatch are generally well tolerated, except in scenarios such as significant non-sparse additive perturbations, where the structure of the system dynamics is substantially altered\footnote{\jdw{In our experiments, dense additive perturbations of sufficient magnitude (around 1\% scaled by the Frobenius norm) may result in poor control or even instability.}}. Nevertheless, it is often reasonable to assume in practice that the structural form of the system dynamics is correctly specified, with uncertainties arising predominantly from parameter variations rather than from (significant) structural discrepancies. }

\section{CONCLUSION}\label{conc}
This paper has proposed HDeePC, a novel form of predictive control which uses both data and model information. Conditions have been derived under which we are able to prove feasible set equivalence and equivalent closed-loop behavior for the proposed HDeePC. Case studies (including a BESS in a microgrid\jdw{, and a larger hybrid AC/DC power system}) have been given to demonstrate the advantages in some cases of HDeePC \jdw{compared to DeePC and subspace ID + MPC}. Future work involves investigating \jdw{regularization selection in the context of HDeePC, considering} other formulations to incorporate partial model knowledge into DeePC/HDeePC, \jdw{analytically addressing the effect of measurement noise and plant-model mismatch on HDeePC, analyzing the choice of known/unknown states from a theoretical perspective, and relaxing the fairly restrictive theoretical conditions with respect to applying HDeePC to non-linear systems.}

\section*{ACKNOWLEDGMENTS}
\jdw{The author thanks Cameron Mantell for useful discussions.}

\bibliographystyle{IEEEtran}
\bibliography{refs}   

\begin{thebibliography}{10}
\providecommand{\url}[1]{#1}
\csname url@samestyle\endcsname
\providecommand{\newblock}{\relax}
\providecommand{\bibinfo}[2]{#2}
\providecommand{\BIBentrySTDinterwordspacing}{\spaceskip=0pt\relax}
\providecommand{\BIBentryALTinterwordstretchfactor}{4}
\providecommand{\BIBentryALTinterwordspacing}{\spaceskip=\fontdimen2\font plus
\BIBentryALTinterwordstretchfactor\fontdimen3\font minus \fontdimen4\font\relax}
\providecommand{\BIBforeignlanguage}[2]{{%
\expandafter\ifx\csname l@#1\endcsname\relax
\typeout{** WARNING: IEEEtran.bst: No hyphenation pattern has been}%
\typeout{** loaded for the language `#1'. Using the pattern for}%
\typeout{** the default language instead.}%
\else
\language=\csname l@#1\endcsname
\fi
#2}}
\providecommand{\BIBdecl}{\relax}
\BIBdecl

\bibitem{hjalmarsson2005}
N.~Hjalmarsson, ``From experiment design to closed-loop control,'' \emph{Automatica}, vol.~41, no.~3, pp. 393--438, 2005.

\bibitem{dorfler2023}
F.~Dörfler, J.~Coulson, and I.~Markovsky, ``\jdw{Bridging Direct and Indirect Data-Driven Control Formulations via Regularizations and Relaxations},'' \emph{IEEE Transactions on Automatic Control}, vol.~68, no.~2, pp. 883--897, 2023.

\bibitem{coulson2019}
J.~Coulson, J.~Lygeros, and F.~Dörfler, ``Data-enabled predictive control: In the shallows of the deepc,'' in \emph{European Control Conference (ECC)}, 2019, pp. 307--312.

\bibitem{coulson2022}
------, ``Distributionally robust chance constrained data-enabled predictive control,'' \emph{IEEE Transactions on Automatic Control}, vol.~67, no.~7, pp. 3289--3304, July 2022.

\bibitem{markowsky2006}
I.~Markovsky, J.~C. Willems, S.~V. Huffel, and B.~D. Moor, \emph{Exact and Approximate Modeling of Linear Systems: A Behavioral Approach}.\hskip 1em plus 0.5em minus 0.4em\relax SIAM, 2006.

\bibitem{depersis2020}
C.~D. Persis and P.~Tesi, ``Formulas for data-driven control: Stabilization, optimality, and robustness,'' \emph{IEEE Transactions on Automatic Control}, vol.~65, no.~3, pp. 909--924, March 2020.

\bibitem{huang2019}
L.~Huang, J.~Coulson, J.~Lygeros, and F.~Dörfler, ``Data-enabled predictive control for grid-connected power converters,'' in \emph{IEEE Conference on Decision and Control (CDC)}, 2019, pp. 8130--8135.

\bibitem{huang2022}
------, ``Decentralized data-enabled predictive control for power system oscillation damping,'' \emph{IEEE Transactions on Control Systems Technology}, vol.~30, no.~3, pp. 1065--1077, 2022.

\bibitem{rimoldi2024}
A.~Rimoldi, C.~Cenedese, A.~Padoan, F.~Dörfler, and J.~Lygeros, ``Urban traffic congestion control: A deepc change,'' in \emph{2024 European Control Conference (ECC)}, 2024, pp. 2909--2914.

\bibitem{elokda2021}
E.~Elokda, J.~Coulson, P.~Beuchat, J.~Lygeros, and F.~Dörfler, ``Data-enabled predictive control for quadcopters,'' \emph{International Journal of Robust and Nonlinear Control}, July 2021.

\bibitem{carlet2020}
P.~G. Carlet, A.~Favato, S.~Bolognani, and F.~Dörfler, ``Data-driven continuous-set predictive current control for synchronous motor drives,'' \emph{IEEE Transactions on Power Electronics}, vol.~37, no.~6, pp. 6637--6646, 2022.

\bibitem{berberich2021}
J.~Berberich, J.~Köhler, M.~A. Müller, and F.~Allgöwer, ``Data-driven model predictive control with stability and robustness guarantees,'' \emph{IEEE Transactions on Automatic Control}, vol.~66, no.~4, pp. 1702--1717, 2021.

\bibitem{berberich2025}
J.~Berberich and F.~Allgöwer, ``An overview of systems-theoretic guarantees in data-driven model predictive control,'' \emph{Annual Review of Control, Robotics, and Autonomous Systems}, vol.~8, 2025.

\bibitem{li2025review}
\BIBentryALTinterwordspacing
X.~Li, M.~Yan, X.~Zhang, M.~Han, A.~W.-K. Law, and X.~Yin, ``\jdw{Efficient data-driven predictive control of nonlinear systems: A review and perspectives},'' \emph{Digital Chemical Engineering}, vol.~14, p. 100219, 2025. [Online]. Available: \url{https://www.sciencedirect.com/science/article/pii/S2772508125000031}
\BIBentrySTDinterwordspacing

\bibitem{zhang2023}
K.~Zhang, Y.~Zheng, C.~Shang, and Z.~Li, ``Dimension reduction for efficient data-enabled predictive control,'' \emph{IEEE Control Systems Letters}, vol.~7, pp. 3277--3282, 2023.

\bibitem{vahidimoghaddam2024onlinereducedorderdataenabledpredictive}
\BIBentryALTinterwordspacing
A.~Vahidi-Moghaddam, K.~Zhang, X.~Yin, V.~Srivastava, and Z.~Li, ``\jdw{Online Reduced-Order Data-Enabled Predictive Control},'' 2024. [Online]. Available: \url{https://arxiv.org/abs/2407.16066}
\BIBentrySTDinterwordspacing

\bibitem{zhou2025}
Y.~Zhou, Y.~Lu, Z.~Li, J.~Yan, and Y.~Mo, ``Learning-based efficient approximation of data-enabled predictive control,'' in \emph{2024 IEEE 63rd Conference on Decision and Control (CDC)}, 2024, pp. 322--327.

\bibitem{huang2023}
L.~Huang, J.~Zhen, J.~Lygeros, and F.~Dörfler, ``\jdw{Robust Data-Enabled Predictive Control: Tractable Formulations and Performance Guarantees},'' \emph{IEEE Transactions on Automatic Control}, vol.~68, no.~5, pp. 3163--3170, 2023.

\bibitem{lian2021}
\BIBentryALTinterwordspacing
Y.~Lian and C.~N. Jones, ``\jdw{Nonlinear Data-Enabled Prediction and Control},'' in \emph{Proceedings of the 3rd Conference on Learning for Dynamics and Control}, ser. Proceedings of Machine Learning Research, A.~Jadbabaie, J.~Lygeros, G.~J. Pappas, P.~A.~Parrilo, B.~Recht, C.~J. Tomlin, and M.~N. Zeilinger, Eds., vol. 144.\hskip 1em plus 0.5em minus 0.4em\relax PMLR, 07 -- 08 June 2021, pp. 523--534. [Online]. Available: \url{https://proceedings.mlr.press/v144/lian21a.html}
\BIBentrySTDinterwordspacing

\bibitem{berberich2022}
J.~Berberich, J.~Köhler, M.~A. Müller, and F.~Allgöwer, ``\jdw{Linear Tracking MPC for Nonlinear Systems—Part II: The Data-Driven Case},'' \emph{IEEE Transactions on Automatic Control}, vol.~67, no.~9, pp. 4406--4421, 2022.

\bibitem{lazar2024}
M.~Lazar, ``\jdw{Basis-Functions Nonlinear Data-Enabled Predictive Control: Consistent and Computationally Efficient Formulations},'' in \emph{2024 European Control Conference (ECC)}, 2024, pp. 888--893.

\bibitem{berberich2024}
\BIBentryALTinterwordspacing
J.~Berberich and F.~Allgöwer, ``\jdw{An Overview of Systems-Theoretic Guarantees in Data-Driven Model Predictive Control},'' \emph{Annual Review of Control, Robotics, and Autonomous Systems}, vol.~8, no. Volume 8, 2025, pp. 77--100, 2025. [Online]. Available: \url{https://www.annualreviews.org/content/journals/10.1146/annurev-control-030323-024328}
\BIBentrySTDinterwordspacing

\bibitem{huang2024}
L.~Huang, J.~Lygeros, and F.~Dörfler, ``\jdw{Robust and Kernelized Data-Enabled Predictive Control for Nonlinear Systems},'' \emph{IEEE Transactions on Control Systems Technology}, vol.~32, no.~2, pp. 611--624, 2024.

\bibitem{naf2025}
\BIBentryALTinterwordspacing
J.~Näf, K.~Moffat, J.~Eising, and F.~Dörfler, ``\jdw{Choose Wisely: Data-driven Predictive Control for Nonlinear Systems Using Online Data Selection},'' 2025. [Online]. Available: \url{https://arxiv.org/abs/2503.18845}
\BIBentrySTDinterwordspacing

\bibitem{morato2024}
\BIBentryALTinterwordspacing
M.~M. Morato and M.~S. Felix, ``Data science and model predictive control: A survey of recent advances on data-driven mpc algorithms,'' \emph{Journal of Process Control}, vol. 144, p. 103327, 2024. [Online]. Available: \url{https://www.sciencedirect.com/science/article/pii/S0959152424001677}
\BIBentrySTDinterwordspacing

\bibitem{baros2022}
S.~Baros, C.-Y. Chang, G.~E. Colón-Reyes, and A.~Bernstein, ``Online data-enabled predictive control,'' \emph{Automatica}, vol. 138, p. 109926, 2022.

\bibitem{berberich2020}
J.~Berberich, A.~Koch, C.~W. Scherer, and F.~Allgöwer, ``\jdw{Robust data-driven state-feedback design},'' in \emph{2020 American Control Conference (ACC)}, 2020, pp. 1532--1538.

\bibitem{djeumou2022}
F.~Djeumou and U.~Topcu, ``Learning to reach, swim, walk and fly in one trial: Data-driven control with scarce data and side information,'' \emph{Proceedings of Machine Learning Research}, vol. 168, pp. 453--466, 2022.

\bibitem{watson2025hybriddataenabledpredictivecontrol}
\BIBentryALTinterwordspacing
J.~D. Watson, ``Hybrid data-enabled predictive control: Incorporating model knowledge into the deepc,'' 2025. [Online]. Available: \url{https://arxiv.org/abs/2502.12467}
\BIBentrySTDinterwordspacing

\bibitem{zieglmeier2025semidatadrivenmodelpredictivecontrol}
\BIBentryALTinterwordspacing
S.~Zieglmeier, M.~H. de~Badyn, N.~D. Warakagoda, T.~R. Krogstad, and P.~Engelstad, ``\jdw{Semi-Data-Driven Model Predictive Control: A Physics-Informed Data-Driven Control Approach},'' 2025. [Online]. Available: \url{https://arxiv.org/abs/2504.00746}
\BIBentrySTDinterwordspacing

\bibitem{li2025mdrdeepcmodelinspireddistributionallyrobust}
\BIBentryALTinterwordspacing
S.~Li, J.~Li, C.~Martin, S.~Bakshi, and D.~Chen, ``\jdw{MDR-DeePC: Model-Inspired Distributionally Robust Data-Enabled Predictive Control},'' 2025. [Online]. Available: \url{https://arxiv.org/abs/2506.19744}
\BIBentrySTDinterwordspacing

\bibitem{fiedler2021}
F.~Fiedler and S.~Lucia, ``On the relationship between data-enabled predictive control and subspace predictive control,'' in \emph{2021 European Control Conference (ECC)}, 2021, pp. 222--229.

\bibitem{kuntz2024}
S.~J. Kuntz and J.~B. Rawlings, ``\jdw{Beyond inherent robustness: strong stability of {MPC} despite plant-model mismatch},'' 2024.

\bibitem{Toh2012}
K.-C. Toh, M.~J. Todd, and R.~H. T{\"u}t{\"u}nc{\"u}, \emph{On the Implementation and Usage of SDPT3 -- A Matlab Software Package for Semidefinite-Quadratic-Linear Programming, Version 4.0}.\hskip 1em plus 0.5em minus 0.4em\relax New York, NY: Springer US, 2012, pp. 715--754.

\bibitem{MATTSSON2023625}
P.~Mattsson and T.~B. Schön, ``On the regularization in deepc*,'' \emph{IFAC-PapersOnLine}, vol.~56, no.~2, pp. 625--631, 2023, 22nd IFAC World Congress.

\bibitem{umang2024}
U.~A. Wasekar and J.~D. Watson, ``\jdw{Monte-Carlo analysis of interlinking converter modelling and control in hybrid AC/DC networks},'' in \emph{2024 18th International Conference on Probabilistic Methods Applied to Power Systems (PMAPS)}, 2024, pp. 1--6.

\end{thebibliography}

\section*{APPENDIX: Proof of Theorem \ref{thm:fse}}

\begin{proof}
We start by considering the data-based and model-based constraints in turn. Following the steps of \cite[Theorem 5.1]{coulson2019} in conjunction with the stated assumption that $u^d$ is persistently exciting shows that the data-based constraint \eqref{eq:hdpcdata} gives the feasible set of the set of pairs $(u, y_u)$ satisfying:
\begin{equation}\label{eq:fsc}
\begin{aligned}
    y_u &= \mathscr{O}_N(A, [C_u \quad C_f])x_{ini} \\&+ \mathscr{T}_N([A, B, [C_u \quad C_f], D_u])u, 
\end{aligned}
\end{equation}
where $y_\kappa$ is unconstrained by \eqref{eq:fsc}. For illustration, this may be written as:
\begin{equation}\label{eq:fsc2}
\begin{aligned}
    y_u(1) &= [C_u \quad C_f] x_{ini} + D_uu(1) \\
    y_u(2) &= [C_u \quad C_f] A x_{ini} + [C_u \quad C_f] B u(1) + D_uu(2) \\
    y_u(3) &= [C_u \quad C_f] A^2 x_{ini} +[C_u \quad C_f] A B u(1)\\ &+ [C_u \quad C_f] B u(2) + D_uu(3) \\
    \vdots
\end{aligned}
\end{equation}
Similarly, the feasible set of the model-based constraints \eqref{eq:hdpcss} can be written as the set of pairs $(u, y)$ that satisfy:
\begin{equation}\label{eq:fsk0}
\begin{aligned}
    y_\kappa (1) &= C_\kappa x_{\kappa,ini} + C_yy_u(1) + D_\kappa u(1) \\
    y_\kappa(2) &= C_\kappa A_\kappa x_{\kappa,ini}  + C_\kappa A_yy_u(1) + C_yy_u(2)\\
           &+ C_\kappa B_\kappa u(1) + D_\kappa u(2) \\
    y_\kappa(3) &= C_\kappa A_\kappa^2 x_{k,ini} + C_\kappa A_\kappa A_yy_u(1) + C_\kappa A_yy_u(2) + C_yy_u(3) \\
           &+ C_\kappa A_\kappa B u(1) + C_\kappa B u(2) + D_\kappa u(3) \\
    \vdots
\end{aligned}
\end{equation}
From $A_cx_u = A_yy_u = A_yC_ux_u$ and $C_cx_u = C_yy_u = C_yC_ux_u$ (Assumption \ref{as:transform}), it follows that $A_yC_fx_\kappa$, $A_yD_uu$, $C_yC_fx_\kappa$, and $C_yD_uu$ are all zero. Using this and substituting \eqref{eq:fsc2} into \eqref{eq:fsk0}:
\begin{equation}
\begin{aligned}
    y_\kappa(1) &= [C_c \quad C_\kappa] x_{ini} + D_\kappa u(1) \\
    y_\kappa(2) &= [C_c \quad C_\kappa] A x_{ini} + [C_c \quad C_\kappa] B u(1) + D_\kappa u(2) \\
    y_\kappa(3) &= [C_c \quad C_\kappa] A^2 x_{ini} +[C_c \quad C_\kappa] A B u(1)\\ &+ [C_c \quad C_\kappa] B u(2) + D_\kappa u(3) \\
    \vdots
\end{aligned}
\end{equation}
This can be written as:
\begin{equation}\label{eq:fsk}
\begin{aligned}
    y_\kappa &= \mathscr{O}_N(A, \begin{bmatrix}
        C_c & C_\kappa
    \end{bmatrix})x_{ini} \\&+ \mathscr{T}_N(A, B, \begin{bmatrix}
        C_c & C_\kappa
    \end{bmatrix}, D_\kappa)u
\end{aligned}
\end{equation}
Putting \eqref{eq:fsc} and \eqref{eq:fsk} together gives:
\begin{equation}\label{eq:fscombined}
\begin{aligned}
    \begin{bmatrix}y_u\\y_\kappa\end{bmatrix} &= \begin{bmatrix}\mathscr{O}_N(A, \begin{bmatrix}
        C_u & C_f
    \end{bmatrix}) \\ \mathscr{O}_N(A, \begin{bmatrix}
        C_c & C_\kappa
    \end{bmatrix})\end{bmatrix}x_{ini} \\
    &+ \begin{bmatrix} \mathscr{T}_N(A, B, [C_u \quad C_f], D_u)
        \\\mathscr{T}_N(A, B, \begin{bmatrix}
        C_c & C_\kappa
    \end{bmatrix}, D_\kappa)\end{bmatrix}u
\end{aligned}
\end{equation}
which after rearranging, gives:
\begin{equation*}
    y = \mathscr{O}_N(A,C)x_{ini} + \mathscr{T}_N(A,B,C,D)u,
\end{equation*}
with $A = \begin{bmatrix}
            A_u & A_f \\
            A_c & A_\kappa
        \end{bmatrix}, B = \begin{bmatrix}
            B_u \\
            B_\kappa
        \end{bmatrix}, C = \begin{bmatrix}
            C_u & C_f \\
            C_c & C_\kappa
        \end{bmatrix}, D = \begin{bmatrix}
            D_u \\
            D_\kappa
        \end{bmatrix}$.
To find the feasible set, we add the input and output constraints $u \in \mathcal{U}^N, y \in \mathcal{Y}^N$, yielding the feasible set as the set of pairs $(u, y) \in \{u \in \mathcal{U}^N, y \in \mathcal{Y}^N\}$ that satisfy:
\begin{equation*}
    y = \mathscr{O}_N(A,C)x_{ini} + \mathscr{T}_N(A,B,C,D)u,
\end{equation*}
 This is equivalent to the feasible set of both DeePC \eqref{eq:deepc} and MPC \eqref{eq:mpc} \cite[Theorem 5.1]{coulson2019}, completing the proof. 
\end{proof}

\end{document}